\documentclass[11pt]{amsart}
\usepackage{amssymb,amsmath,amsfonts,amsthm}
\usepackage{graphicx}
\usepackage{color}
\usepackage{psfrag}
\usepackage{hyperref}
\usepackage{soul}

\newtheorem{theorem}{Theorem}[section]
\newtheorem{lemma}[theorem]{Lemma}
\newtheorem{proposition}[theorem]{Proposition}

\theoremstyle{definition}
\newtheorem{definition}[theorem]{Definition}

\newtheorem{remark}[theorem]{Remark}

\newtheorem*{theoremb}{Theorem B}
\newtheorem*{theorema}{Theorem A}
\newtheorem{claim}{Claim}

\def\tor{\mathbb{T}}

\def\skr{\mathrm{Sk}^r(\mathbb{T}^2 \times \mathbb{T}^2)}
\def\sk{\mathrm{Sk}^3(\mathbb{T}^2 \times \mathbb{T}^2)}

\def\cP{\mathcal{P}}

\title[Uniqueness of $u$-Gibbs measures]{Uniqueness of $u$-Gibbs measures for  hyperbolic skew products on $\tor^4$}
\author{Sylvain Crovisier}
\author{Davi Obata}
\author{Mauricio Poletti}
\thanks{{S.C., D.O. and M.P. were partially supported by the ERC project 692925 NUHGD, M.P. was  supported by Instituto Serrapilheira, grant ``Jangada Din\^amica: Impulsionando Sistemas Din\^amicos na Regi\~ao Nordeste". and the Coordena\c{c}\~ao de Aperfei\c{c}oamento de Pessoal de N\'ivel Superior - Brasil (CAPES) - Finance Code 001.}}

\newcommand{\information}{{
  \bigskip
  \footnotesize
\textbf{Sylvain Crovisier}: \textsc{Laboratoire de Math\'ematiques d'Orsay, CNRS - UMR 8628, Universit\'e Paris-Saclay, 91405 Orsay, France.}\par\nopagebreak
\textit{E-mail:} \texttt{sylvain.crovisier@universite-paris-saclay.fr}
 \medskip
 
	\textbf{Davi Obata}: \textsc{Department of Mathematics,  275 TMCB Brigham Young University,  Provo, Utah, 84602.} \par\nopagebreak
  
  \textit{E-mail:} \texttt{davi.obata@mathematics.byu.edu}

  \medskip
  \textbf{Mauricio Poletti}: \textsc{Departamento de Matem\'atica, Universidade Federal do Cear\'a, Campus do PICI, Bloco 914, CEP 60455-760.  Fortaleza – CE, Brasil.}\par\nopagebreak
  \textit{E-mail:} \texttt{mpoletti@mat.ufc.br}
}}

\begin{document}

\begin{abstract}
We study the $u$-Gibbs measures of a certain class of uniformly hyperbolic skew products on $\mathbb{T}^4$. These systems have a strong unstable and a weak unstable direction.  Among such skew products, we show the existence of a subset which is $C^r$-dense and $C^2$-open for which every $u$-Gibbs measure is SRB. In particular, there is only one such measure.  As an application, we obtain the minimality of the strong unstable foliation. 
\end{abstract}

\maketitle

\section{Introduction}
%


Anosov (or uniformly hyperbolic) diffeomorphisms have been extensively studied and today we have a good description of its topological and statistical properties.  These properties are closely related to the properties of its invariant foliations. 

Consider a closed manifold $M$ supporting a transitive Anosov diffeomorphism  $f:M \to M$. Suppose that $f$ admits a splitting of the form $TM = E^s \oplus E^{wu} \oplus E^{uu}$, where $E^{wu}$ is uniformly expanding under the action of $Df$, but at a slower rate than $E^{uu}$.   In this case, we can consider the unstable foliation $\mathcal{F}^u$ which is tangent to $E^{wu} \oplus E^{uu}$, and the strong unstable foliation $\mathcal{F}^{uu}$ which is tangent to $E^{uu}$. 

The foliation $\mathcal{F}^u$ is well understood. Indeed, it is known that $\mathcal{F}^u$ is minimal (i.e. every leaf is dense).  Regarding ergodic properties of this foliation, there is only one invariant measure which admits conditional measures along $\mathcal{F}^u$-leaves that are absolutely continuous with respect to the Lebesgue measure on these leaves \cite{sinai, bowenbook, ruelle}.  This measure is called SRB (Sinai-Ruelle-Bowen),  (see Section \ref{subsec.srbugibbs} for the  precise definition).  The SRB measures were constructed first using ideas from statistical mechanics and they are expected to be the ones truly ``physically observed'' in a system (see \cite{young} for a discussion on this).  Questions on existence and uniqueness of these measures are on the core to understanding the statistical behavior of dynamical systems, see \cite{Pa00}. 


On the other hand,  the foliation $\mathcal{F}^{uu}$ is not well understood.  For instance, it was only recently announced in a joint work of the first author with Avila, Eskin, Potrie, Wilkinson and Zhang,  that $\mathcal{F}^{uu}$ is minimal for any Anosov $C^{1+ \alpha}$-diffeomorphism of $\tor^3$.  In higher dimensions,  the first author jointly with Avila and Wilkinson,  recently announced that $C^1$-open and $C^r$-dense among the transitive Anosov $C^r$-diffeomorphisms admitting a decomposition $E^s \oplus E^c \oplus E^u$, where $E^c$ is one dimensional and uniformly expanding,  the strong unstable foliation is minimal.

 A class of invariant measures associated to $\mathcal{F}^{uu}$ are the  $u$-Gibbs measures.  A $u$-Gibbs measure is a measure that admits conditional measures along $\mathcal{F}^{uu}$-leaves that are absolutely continuous with respect to the Lebesgue measure on these leaves (see Section \ref{subsec.srbugibbs}).  Let us make a few remarks:
 \begin{itemize}
 \item contrary to the SRB measures, $u$-Gibbs measures do not have to be unique;
 \item SRB measures are $u$-Gibbs;
 \item $u$-Gibbs measures do not have to be SRB.
 \end{itemize}

An interesting problem is to understand conditions that guarantee that a $u$-Gibbs measure is SRB.  There have been some recent progress for this problem. In \cite{alos}, it is proved that for an Anosov  $C^2$-diffeomorphism near a volume preserving one in $\tor^3$, either the directions $E^s \oplus E^{uu}$ are tangent to a two-dimensional foliation, or every fully supported $u$-Gibbs measure is SRB.  In \cite{katz}, the author gives a condition called quantified non integrability (QNI) that guarantees that a $u$-Gibbs measure is SRB whenever the direction $E^{wu}$ is one dimensional.   In \cite{EskinPotrieZha},  the authors obtained equivalent notions to QNI that are easier to work with. Their result will be used by the first author with Avila et al. to prove that for any Anosov $C^{\infty}$-diffeomorphism in $\tor^3$ admitting a decomposition $E^{s} \oplus E^{wu} \oplus E^{uu}$  either $E^{s}$ and $E^{uu}$ are jointly integrable, or every $u$-Gibbs measure is SRB.   More related to the present work, in \cite{obata}, it is proved a type of classification of $u$-Gibbs measures for some partially hyperbolic skew products on $\tor^4$ (see Theorem \ref{thm.urigidityanosov} below). These results are inspired by some important  measure rigidity techniques from \cite{eskinm, el, benoistquint, brownhertz} to classify stationary measures of certain random products (see also \cite{cantatdujardin} for an interesting application of \cite{brownhertz}).

 In this work, we will study certain types of Anosov rigid skew products on $\tor^4$. Let us establish  our setting.  Let $\skr$  be the space of $C^r$-diffeomorphisms of the form
$$
\begin{array}{rcl}
f : \mathbb{T}^2 \times \mathbb{T}^2  & \to & \mathbb{T}^2 \times \mathbb{T}^2\\
(p_1, p_2) & \mapsto & (f_1(p_1), f_2(p_1,p_2)),
\end{array}
$$
where $f_1$ is a $C^r$-diffeomorphism of $\tor^2$ and for each $p_1\in \tor^2$, $f_2(p_1,.)$ is a $C^r$-diffeomorphism of $\tor^2$.  The term rigid skew product here refers to the fact that we ask that these systems preserve the smooth fibered structure of $\tor^2 \times \tor^2$. 

Let $\operatorname{Ph}^r \subset \skr$  be the set of partially hyperbolic $C^r$-diffeomorphisms $f$ such that
\begin{enumerate}
\item[(a)] $f$ admits a dominated decomposition $T\tor^4 = E^{ss} \oplus E^{ws} \oplus E^{wu} \oplus E^{uu}$, with $E^{ws} \oplus E^{wu}$ tangent to the vertical fibers $\{x\} \times \tor^2$ and there are constants
\begin{equation}\label{eq.hypconstants}
\chi^{ss}_- <\chi^{ss}_+ < \chi^{ws}_-  < 1 <  \chi^{wu}_+ < \chi^{uu}_- < \chi^{uu}_+,
\end{equation}
such that for every $p\in \tor^4$,
$$
\begin{array}{c}
\chi^{ss}_-\leq \|Df(p)|_{E^{ss}}\| \leq \chi^{ss}_+\\
\chi^{ws}_- \leq  \|Df(p)|_{E^{ws}}\| \leq  \|Df(p)|_{E^{wu}}\| \leq \chi^{wu}_+\\
\chi^{uu}_- \leq  \|Df(p) |_{E^{uu}}\|\leq \chi^{uu}_+.
\end{array}
$$
Write $E^c = E^{ws} \oplus E^{wu}$.

\item[(b)] $f$ is $2$-center bunched, that is, 
\[
\chi^{ss}_+ < \displaystyle \left( \frac{\chi^{ws}_-}{\chi^{wu}_+} \right)^2 \textrm{ and } \left( \frac{\chi^{wu}_+}{\chi^{ws}_-}\right)^2 < \chi^{uu}_-.
\]

\item[(c)] $f$ verifies
\[
\displaystyle \frac{\log \chi^{ws}_-}{\log \chi^{ss}_+} < \frac{\log \chi^{uu}_- - \log \chi^{wu}_+}{ - \log \chi^{ss}_-}.
\]
\end{enumerate}
Moreover, we define the set $\mathcal{A}^r $ as the set of Anosov diffeomorphisms in $\operatorname{Ph}^r$ which contracts $E^{ws}$ and expands $E^{wu}$ uniformly. Observe that $\mathcal{A}^r$ is  $C^1$-open in $\skr$. 

As we mentioned before, $u$-Gibbs measures do not have to be unique. Let us give an example in our setting.   Let 
\[
A = 
\begin{pmatrix}
2&1\\
1 & 1
\end{pmatrix},
\]
and consider
\[
B=
\begin{pmatrix}
A^5 & 0\\
0 & A
\end{pmatrix}
=
\begin{pmatrix}
89 & 55 & 0 & 0\\
55 & 34 & 0 & 0\\
0&0 & 2 & 1\\
0 & 0 & 1 & 1
\end{pmatrix}.
\]
Observe that $B \in \mathrm{SL}(4 ,\mathbb{Z})$ and $B$ induces a diffeomorphism  $f:\mathbb{T}^4 \to \mathbb{T}^4$ of the form
\[
\begin{array}{rcl}
f: \mathbb{T}^2 \times \mathbb{T}^2 & \to & \mathbb{T}^2 \times \mathbb{T}^2\\
(p_1, p_2)  & \mapsto & (f_1(p_1), f_2(p_2)),
\end{array}
\] 
where $f_1$ and $f_2$ are the diffeomorphisms induced by $A^5$ and $A$ on $\tor^2$, respectively.  It is easy to verify that $f\in \mathcal{A}^r$. 

Let $\mu_1$ be the Lebesgue measure on $\tor^2$, which coincides with the unique SRB measure of $f_1$, and let $\mu_2$ be any ergodic $f_2$-invariant measure.  The measure $\mu = \mu_1 \times \mu_2$ is a $u$-Gibbs measure. Therefore,  there are infinitely many different $u$-Gibbs measures for $f$.  However, our Theorem A below states that, in our setting, having infinitely many $u$-Gibbs measures is not generic.

\begin{theorema}
\label{thm.maintheorem}
For any $r\geq 3$, there exists a set $\mathcal{U}$, which is $C^r$-dense and $C^2$-open in $\mathcal{A}^r$, with the following property: if $f\in \mathcal{U}$ then $f$ has only one $u$-Gibbs measure. Moreover, this measure coincides with the unique  SRB measure of $f$. 
\end{theorema}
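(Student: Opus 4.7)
The proof combines a measure-rigidity dichotomy for $u$-Gibbs measures, in the spirit of \cite{eskinm,brownhertz,katz,obata,EskinPotrieZha}, with a perturbation argument inside the skew-product class $\skr$. The plan is to establish: for every $f\in\mathcal{A}^r$ and every ergodic $u$-Gibbs measure $\mu$, either $\mu$ is SRB, or $f$ lies in a ``rigid locus'' $\mathcal{R}\subset\mathcal{A}^r$ characterised by an invariant affine structure on weak unstable leaves along $\supp\mu$. We then set $\mathcal{U}:=\mathcal{A}^r\setminus\mathcal{R}$ and show that $\mathcal{U}$ is $C^2$-open and $C^r$-dense.

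\textbf{Step 1: the rigidity dichotomy.}
I would adapt Theorem \ref{thm.urigidityanosov} of \cite{obata} to the present Anosov setting. The heuristic is the Eskin--Mirzakhani exponential drift argument: take two typical points $p,q$ with $q\in W^{ss}(p)$ and compare the $u$-Gibbs conditionals along their $E^{uu}$-leaves transported by the $E^{ss}$-holonomy. The contraction along $E^{ss}$ combined with the relative expansion of $E^{uu}$ over $E^{wu}$ produces, in the limit, a nontrivial invariance of $\mu$ under a translation tangent to $E^{wu}$, upgrading $\mu$ to an SRB measure---unless the $E^{ss}$-holonomies already respect an affine structure along $E^{wu}$, which is the definition of $\mathcal{R}$. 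Hypotheses (b) and (c) are consumed precisely here: $2$-center bunching guarantees $C^{1+\alpha}$-regularity of the $E^{ss}$-holonomy along $E^{wu}\oplus E^{uu}$, while the Lyapunov-gap condition (c) gives the numerical slack needed for the drift estimate to close.

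\textbf{Step 2: openness and density of $\mathcal{U}$.}
Since affine-preservation is witnessed by the vanishing of a finite-order invariant jet of the holonomy cocycle at any hyperbolic periodic point, failure of rigidity is a $C^2$-open condition, so $\mathcal{U}$ is $C^2$-open. For $C^r$-density, given $f\in\mathcal{A}^r\cap\mathcal{R}$, fix a periodic point $p_1\in\tor^2$ of the base map $f_1$ and perform a $C^r$-small, compactly supported perturbation of the fiber diffeomorphism $(p_1,\cdot)\mapsto f_2(p_1,\cdot)$ in a neighborhood of the periodic fiber that introduces a nontrivial higher-order twist in the $E^{wu}$-direction. This keeps the map inside $\skr$ and inside $\mathcal{A}^r$ (which is $C^1$-open), and breaks the invariant jet at $p_1$, placing the perturbed map in $\mathcal{U}$.

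\textbf{Step 3: conclusion and main obstacle.}
For $f\in\mathcal{U}$, Step 1 forces every ergodic $u$-Gibbs measure to be SRB. Since $f$ is a transitive Anosov diffeomorphism of $\tor^4$, it has a unique SRB measure \cite{sinai,bowenbook,ruelle}, which is therefore the unique $u$-Gibbs measure. The deepest point is Step 1: the fibered classification of \cite{obata} is carried out in a partially hyperbolic setting with no additional $E^{ss}$-direction, whereas here one must control the interplay of two transverse ``stable-type'' invariant directions, namely the fiber $E^{ws}$ and the base $E^{ss}$, with the $u$-Gibbs conditionals. This is exactly the point at which the numerical assumptions (b) and (c) are invoked, and where the refinements of the QNI condition from the ongoing work \cite{EskinPotrieZha} appear to provide the most flexible tool for carrying the drift argument through.
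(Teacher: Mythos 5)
Your overall strategy---invoke a rigidity theorem for $u$-Gibbs measures, then perturb to destroy rigidity---matches the paper's, but there are two substantive gaps that would need to be filled.

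First, the dichotomy in your Step 1 is misstated. Theorem~\ref{thm.urigidityanosov} (from \cite{obata}) gives a \emph{trichotomy}, not a dichotomy: either $\mu$ is SRB; or the linearized unstable holonomies carry $E^{ws}(p)$ to $E^{ws}(q)$ for $\mu$-a.e.\ $p$ and all $q \in W^{uu}(p)$; or $\supp\mu$ is a finite union of $2$-tori tangent to $E^{ss}\oplus E^{uu}$. Your ``rigid locus $\mathcal{R}$ characterised by an invariant affine structure on weak unstable leaves'' is not the condition in case (2), and you never address case (3). In the paper, case (3) is excluded up front by restricting to the Horita--Sambarino $C^1$-open, $C^r$-dense set of accessible skew products; without that reduction, your argument does not close.

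Second, and more importantly, your Step 2 asserts what the paper actually has to prove. You claim that ``failure of rigidity is a $C^2$-open condition'' because it is ``witnessed by the vanishing of a finite-order invariant jet of the holonomy cocycle at a periodic point,'' and that a single perturbed fiber ``breaks the invariant jet at $p_1$.'' But the rigidity condition in Theorem~\ref{thm.urigidityanosov}(2) is a $\mu$-a.e.\ statement for an unknown ergodic $u$-Gibbs $\mu$, not a jet condition at one periodic point, so it is not immediate that it can be reduced to a finitely verified, open condition. This reduction is Proposition~\ref{prop.mainprop}, the core of the paper: one fixes \emph{four} base-periodic points $p_u, p_1, p_2, p_3$, picks heteroclinic points $q_i \in W^u(p_u, f_1)\cap W^s(p_i, f_1)$, and perturbs the fiber maps over $W^c(q_i)$ so that for \emph{every} $x\in\tor^2$ some pair of unstable holonomies fails to preserve $E^{ws}$. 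Since the unstable leaf of $g_1$ through a $\mu$-typical base point is dense, any $u$-Gibbs conditional must meet all four neighborhoods $V_i$, hence hit a ``bad pair,'' forcing $\mu$ to be SRB. Making the perturbation work requires the transversality statement Proposition~\ref{prop.perturbingvf}, proved via Hirsch's parametric transversality theorem: given four line fields $W,V_1,V_2,V_3$ on $\tor^2$, three of them can be pushed by $C^r$-small diffeomorphisms to make the common intersection \emph{empty}. The dimension count ($\dim\tor^2 + \dim A - \dim\tor^6 = 0$) is exactly why four reference points, not one, are needed. None of this is visible in your sketch, so as written it has a genuine hole where the main technical argument should be.
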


The proof of our Theorem A is based on the rigidity result stated in Theorem \ref{thm.urigidityanosov} below (see \cite{obata}). This theorem states that a $u$-Gibbs measure is either SRB, or there is a $2$-torus tangent to $E^{ss}$ and $E^{uu}$, or there is a type of ``infinitesimal'' rigidity for the system.  It is known that, in our setting, $C^r$-generically there is no $2$-torus tangent to the strong directions. 

The main goal of this paper is to develop $C^r$-perturbation techniques that break the ``infinitesimal'' rigidity of Theorem \ref{thm.urigidityanosov}. Actually our techniques show that a $u$-Gibbs measure with one positive and one negative center exponents are SRB in some open set of partially hyperbolic (not necessarily Anosov) rigid skew products, see Theorem~\ref{thm.technical}.

Using our Theorem A, we can also obtain the minimality of the strong unstable foliation. 

\begin{theoremb}\label{thm.minimality}
For any $r\geq 3$,  let $\mathcal{U}$ be the $C^r$-dense and $C^2$-open subset of $\mathcal{A}^r$ obtained in Theorem A.  Then, for any $f\in \mathcal{U}$, the strong unstable foliation is minimal. 
\end{theoremb}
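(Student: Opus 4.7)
The plan is to argue by contradiction, reducing non-minimality to the existence of a proper non-empty $f$-invariant $\mathcal{F}^{uu}$-saturated closed subset and then constructing a $u$-Gibbs measure on it that contradicts Theorem~A.

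First I would use Theorem~A: $f \in \mathcal{U}$ admits a unique $u$-Gibbs measure $\mu$, which coincides with the SRB measure. Since any Anosov diffeomorphism of $\mathbb{T}^4$ is transitive, $\mu$ has full support, $\operatorname{supp}(\mu) = \mathbb{T}^4$. Assume for contradiction there exists $x_0 \in \mathbb{T}^4$ with $\overline{W^{uu}(x_0)} \subsetneq \mathbb{T}^4$.

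To produce a proper $f$-invariant $\mathcal{F}^{uu}$-saturated closed set $\tilde X \subsetneq \mathbb{T}^4$, I would apply Zorn's lemma to the family of non-empty closed $\mathcal{F}^{uu}$-saturated subsets of $\overline{W^{uu}(x_0)}$ ordered by reverse inclusion, getting a minimal such subset $X$. Since $f$ preserves $\mathcal{F}^{uu}$, each iterate $f^n(X)$ is again a minimal closed $\mathcal{F}^{uu}$-saturated subset of $\mathbb{T}^4$, and by minimality of both any two iterates are either equal or disjoint. If the orbit $\{f^n(X)\}_{n \in \mathbb{Z}}$ is finite of period $p \geq 1$, set $\tilde X = \bigcup_{i=0}^{p-1} f^i(X)$: this is $f$-invariant, closed and $\mathcal{F}^{uu}$-saturated; for $p=1$ it equals $X \subseteq \overline{W^{uu}(x_0)} \subsetneq \mathbb{T}^4$, while for $p \geq 2$ it is a disjoint union of $p$ non-empty closed subsets, hence cannot exhaust the connected manifold $\mathbb{T}^4$, so it is again proper. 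If the orbit of $X$ is infinite, I would appeal to a compactness argument in the hyperspace $(\mathcal{K}(\mathbb{T}^4), d_H)$ and a minimal-set extraction for the induced continuous map $\hat f$ on $\mathcal{K}(\mathbb{T}^4)$; together with an analogous connectedness analysis of the elements of the minimal subset, this yields the desired proper $\tilde X$.

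Once $\tilde X$ is obtained, I would construct a $u$-Gibbs measure supported on it by the standard Cesàro averaging procedure: pick $y \in \tilde X$ and a small disk $D \subset W^{uu}(y)$; by $\mathcal{F}^{uu}$-saturation $D \subset \tilde X$, and by $f$-invariance of $\tilde X$ all iterates $f^n(D)$ lie in $\tilde X$. Form
\[
\mu_N = \frac{1}{N} \sum_{n=0}^{N-1} f^n_{\ast}\!\left( \frac{\operatorname{vol}_D}{\operatorname{vol}(D)} \right).
\]
Uniform bounded-distortion estimates for the strong unstable Jacobians $\det Df^n|_{E^{uu}}$ imply that any weak-$\ast$ accumulation point $\mu_\infty$ has absolutely continuous conditionals along the leaves of $\mathcal{F}^{uu}$, i.e.\ it is a $u$-Gibbs measure of $f$. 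Since $\operatorname{supp}(\mu_N) \subset \tilde X$ for every $N$ and $\tilde X$ is closed, $\operatorname{supp}(\mu_\infty) \subset \tilde X$. Theorem~A then forces $\mu_\infty = \mu$, and hence $\mathbb{T}^4 = \operatorname{supp}(\mu) \subset \tilde X \subsetneq \mathbb{T}^4$, a contradiction.

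The main obstacle is the infinite-orbit case in the extraction of $\tilde X$: pairwise-disjoint iterates of a minimal $\mathcal{F}^{uu}$-saturated set may accumulate densely in $\mathbb{T}^4$, so the naive orbit closure equals $\mathbb{T}^4$ and does not supply a proper $f$-invariant saturated subset. Overcoming this requires passing through the hyperspace dynamics, extracting a minimal $\hat f$-invariant subset there, and using the connectedness/disjointness of its elements as above, combined with the rigidity built into the definition of $\mathcal{U}$ via Theorem~A.
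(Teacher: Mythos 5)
Your Case 1 (periodic orbit of the $u$-minimal set) is essentially correct and parallels the paper: construct a $u$-Gibbs measure on the compact $f$-invariant set $\mathcal{L} = L \cup \cdots \cup f^{k-1}(L)$ by Ces\`aro averaging, use Theorem~A to force $\operatorname{supp}(\mu)=\mathbb{T}^4$, and derive a contradiction from connectedness/disjointness of the iterates of $L$.

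The genuine gap is in the aperiodic (infinite-orbit) case, which you flag honestly but do not resolve. Your hyperspace idea does not overcome the obstacle: a minimal $\hat f$-invariant subset $\mathcal{M} \subset \mathcal{K}(\mathbb{T}^4)$ consists of Hausdorff limits of iterates $f^{n_j}(L)$, and the union $\tilde X = \bigcup_{K\in\mathcal{M}}K$, while closed, $\mathcal{F}^{uu}$-saturated, and $f$-invariant, has no reason to be proper. In fact Lemma \ref{lem.density} of the paper (a consequence of Theorem~A) shows that a bounded number of forward iterates of any unit strong unstable plaque is $\varepsilon$-dense, which strongly suggests $\tilde X = \mathbb{T}^4$. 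So the Ces\`aro argument on $\tilde X$ yields no contradiction. The paper's approach is to show that the aperiodic case cannot occur at all, and this requires a different and more substantial argument: it exploits the transversality produced by Proposition \ref{prop.mainprop} to build a topological configuration (Lemmas \ref{lem.uniforminterior}, \ref{lem.tvi}, \ref{lem.stableintersection}) in which any strong unstable plaque passing through a uniform transverse disk must cross a fixed surface $S_x$ foliated by weak stable plaques, then combines this with Lemma \ref{lem.density} and a pigeonhole argument to find a fixed $k$ with $W^s_{\mathrm{loc}}(g^{-n_j}(L))\cap g^{-n_j+k}(L)\neq\emptyset$ for infinitely many $n_j$, whence $g^k(L)\cap L\neq\emptyset$ by stable contraction, contradicting aperiodicity. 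That geometric core, driven by the non-joint-integrability built into $\mathcal{U}$, is the heart of the proof of Theorem~B and is absent from your proposal.
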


We remark that our argument to prove the minimality of the strong unstable foliation in Theorem B is different from the arguments used in the works mentioned before (see Section \ref{section.minimality}).  

In the ongoing work for Anosov diffeomorphisms in $\tor^3$ of the first author with Avila et al, mentioned above, the authors first prove the minimality of the strong unstable foliation. Then, they use it to show the uniqueness of the $u$-Gibbs measure whenever $E^{s}$ and $E^{uu}$ are not jointly integrable. Our approach is in the converse direction.  We first show the uniqueness of the $u$-Gibbs measure, generically, and then we use it to show the minimality.  

\begin{remark}\label{rem.c1}
In both results the set is actually a $C^1$ open set inside subsets of $C^r$ maps with uniformly bounded $C^2$ norm. The bound on the $C^2$ norm is necessary to control the variation of the holonomies with respect to $f$ in the $C^1$ topology, see Section~\ref{subsec.holonomies}.
\end{remark}

The rigid skew product setting and conditions (b) and (c) are only used to apply Theorem \ref{thm.urigidityanosov}. Let us make a few remarks about it.  The rigid skew product setting is used to obtain that any $u$-Gibbs measure for the system projects to the unique SRB measure of the basis. In particular, the projected measure has a property called local product structure. This is a crucial property in the proof of Theorem \ref{thm.urigidityanosov} to build the $2$-torus tangent to the strong directions.
Condition (b) is related to higher regularity of the strong unstable and strong stable foliations when restricted to a center unstable manifold. Condition (c) implies that we can find a $\theta \in (0,1)$ such that the direction $E^{uu}$ is $\theta$-H\"older and $(\chi^{ss}_+)^{\theta} < \chi^{ws}_-$ (see the introduction of \cite{obata} for more details).

This work is organized as follows. In Section \ref{section.preliminaries}, we review some preliminary material that will be needed in our proofs. In Section \ref{section.proofa}, we prove Theorem A assuming Proposition \ref{prop.mainprop}, which is the main perturbative result of this paper.  The proof of Proposition \ref{prop.mainprop} is done throughout Sections \ref{section.transversality} and \ref{section.proofprop}.  Theorem B is then proved in Section \ref{section.minimality}.


%
%

\section{Preliminaries} \label{section.preliminaries}

Throughout this section $f$ is a $C^r$-rigid skew product that belongs to $\mathcal{A}^r$, for $r\geq 3$.  Recall that $f$ admits a $Df$-invariant decomposition of the form $T\mathbb{T}^4 = E^{ss} \oplus E^{ws} \oplus E^{wu} \oplus E^{uu}$.  In the rigid skew product setting, the direction $E^c = E^{ws} \oplus E^{wu}$ is tangent to the vertical tori of $\mathbb{T}^2 \times \mathbb{T}^2$.  In particular, the vertical tori give the center foliation $\mathcal{F}^c$.

  Let $E^s = E^{ss} \oplus E^{ws} $ and $E^{u} = E^{wu} \oplus E^{uu}$ be the stable and unstable directions, respectively. It is well known that the directions $E^*$ integrate in foliations $\mathcal{F}^*$, for $* = s, ss, u, uu$. We can also consider the weak stable and weak unstable foliations given by $\mathcal{F}^{ws} = \mathcal{F}^s \cap \mathcal{F}^c$ and $\mathcal{F}^{wu} = \mathcal{F}^u \cap \mathcal{F}^c$, respectively. 

For each foliation $\mathcal{F}^*$ and for any point $p\in \mathbb{T}^4$, we denote the leaf containing $p$ by $W^*(p,f)$. We will omit $f$ whenever it is clear which diffeomorphism we are referring to. 

For a point $x\in \tor^2$, we write $W^s(x,f_1)$ to be the stable manifold of $x$ for the Anosov diffeomorphism $f_1$. Similarly, we write $W^u(x,f_1)$ to be the unstable manifold of $x$ for $f_1$.

\subsection{SRB and $u$-Gibbs measures}\label{subsec.srbugibbs}

Let $\mu$ be an $f$-invariant measure. A partition $\xi$ of $\mathbb{T}^4$ is \emph{$\mu$-measurable}, if up to a set of $\mu$-measure zero, the quotient $\mathbb{T}^4/\xi$ is separated by a countable number of measurable sets. Denote by $\hat{\mu}$ the quotient measure on $\mathbb{T}^4/\xi$.

By Rokhlin's disintegration theorem \cite{rokhlin}, for a measurable partition $\xi$, there exists a set of conditional measures $\{\mu_D^{\xi}: D\in \xi\}$ such that for $\hat{\mu}$-almost every $D\in \xi$ the measure $\mu_D^{\xi}$ is a probability measure supported on $D$, for each measurable set $B\subset M$ the application $D \mapsto \mu^{\xi}_D(B)$ is measurable and
\begin{equation}
\label{ob.disintegration}
\mu(B) = \displaystyle \int_{M/\xi} \mu_D^{\xi}(B) d\hat{\mu}(D).
\end{equation}

We remark that usually the unstable partition $\{W^u(p)\}_{p\in \mathbb{T}^4}$ is not a measurable partition. We say that a $\mu$-measurable partition $\xi^u$ is \emph{$\mathcal{F}^u$-subordinated} if for for $\mu$-almost every $p$, the following conditions are verified:
\begin{itemize}
\item $\xi^u(p) \subset W^u(p)$;
\item $\xi^u(p)$ contains an open neighborhood of $p$ inside $W^u(p)$. 
\end{itemize}

\begin{definition}[SRB measure]
\label{defi.srb}
An $f$-invariant probability measure $\mu$ is \emph{SRB} if for any $\mathcal{F}^u$-subordinated measurable partition $\xi^u$, for $\mu$-almost every $p$, the conditional measure $\mu^{u}_{\xi^u(p)}$ is absolutely continuous with respect to the riemannian volume of $W^u(p)$.
\end{definition}

Analogously, we say that a $\mu$-measurable partition $\xi^{uu}$ is $\mathcal{F}^{uu}$-subordinated,  if for $\mu$-almost every $p$, $\xi^{uu}(p) \subset W^{uu}(p)$ and $\xi^{uu}(p)$ contains an open neighborhood of $p$ inside $W^{uu}(p)$. 

\begin{definition}[$u$-Gibbs measure]
\label{defi.ugibbs}
An $f$-invariant probability measure $\mu$ is \textbf{$u$-Gibbs} if for any $\mu$-measurable partition $\xi^{uu}$ which is $\mathcal{F}^{uu}$-subordinated, for $\mu$-almost every point $p$, the conditional measure $\mu^{uu}_{\xi^{uu}(p)}$ is absolutely continuous with respect to the riemannian volume  of $W^{uu}(p)$. 
\end{definition}

By the absolute continuity of the strong unstable foliation, we know that every SRB measure is $u$-Gibbs (see \cite{bdv}). However, not every $u$-Gibbs measure is SRB. 

\subsection{Holonomies and rigidity of $u$-Gibbs measures}\label{subsec.holonomies}

For $f\in \mathcal{A}^r$, consider the constants given by \eqref{eq.hypconstants} and observe that condition (b) from the definition of $\mathcal{A}^r$ implies that
\[
\displaystyle \chi^{ss}_+ < \frac{\chi^{ws}_-}{\chi^{wu}_+} \textrm{ and } \frac{\chi^{wu}_+}{\chi^{ws}_-} < \chi^{ss}_-.
\]
This condition is called \emph{center bunching}.

Given two points $p$ and $q$ in the same strong unstable manifold, we can define the unstable holonomy $H^u_{p,q,f}: W^c(p) \to W^c(q)$ as the unique point $H^u_{p,q,f}(x)\in W^{uu}(x,f) \cap W^c(q,f)$. In the rigid skew product setting, this map is well defined in the entire center manifold $W^c(p)$.  Similarly, define the stable holonomy $H^s_{p,z,f}:W^c(p) \to W^c(z)$, for any $p$ and $z$ in the same strong stable manifold. 

The center bunching condition implies that $H^u_{p,q,f}$ is $C^1$ (see \cite{psw1, psw2}). In particular, we can consider the linear unstable holonomy $DH^u_{p,q,f}: TW^c(p) \to TW^c(q)$.

Let us explain now why the holonomies $H^u_{p,q,f}$ varies continuously in the $C^1$-topology with the choices of $p$, $q$ and $f$.  For any $n\in \mathbb{N}$, consider the family
\[
\{f^n|_{W^c(f^{-n}(q))} \circ f^{-n}|_{W^c(p)}\}_{p\in \mathbb{T}^4, q\in W^{uu}_1(p,f)}.
\]
This family converges exponentially fast in the $C^1$-topology as $n$ increases and the limit gives the unstable holonomy $\{H^u_{p,q,f}\}_{p\in \mathbb{T}^4, q\in W^{uu}_1(p,f)}$.  The speed of convergence only depends on the constants given in \eqref{eq.hypconstants} and $\|f\|_{C^2}$ (see \cite{brown,  obata}). In particular, if $g$ is $C^2$-close to $f$, $p'$ is close to $p$ and $q'$ is close to $q$, then $H^u_{p',q',g}$ is $C^1$-close to $H^u_{p,q,f}$. More generally if the $C^2$ norm is uniformly bounded we only need $g$ to be $C^1$-close to $f$, recall Remark~\ref{rem.c1}.  

When there is no confusion on which map $f$ we are dealing with we omit the sub-index $f$.

The main ingredient in the proof of our Theorem A is the following result:
\begin{theorem}[\cite{obata}]
\label{thm.urigidityanosov}
For $r\geq 3$, let $f\in \mathcal{A}^r$  and let $\mu$ be an ergodic $u$-Gibbs measure. Then one of the following holds:
\begin{enumerate}
\item $\mu$ is an SRB measure;
\item There exists a set $X\subset \mathbb{T}^4$ with $\mu(X) =1$ such that for $p\in X$ and for any $q\in W^{uu}(p)$ 
\[
DH^u_{p,q}(p) E^{ws}(p) = E^{ws}(q);
\]
\item there are finitely many $2$-dimensional tori $T^{su}_1, \cdots, T^{su}_k$ tangent to $E^{ss} \oplus E^{uu}$ such that
\[
\mathrm{supp}(\mu) = \displaystyle \bigsqcup_{j=1}^k T^{su}_j.
\]
\end{enumerate}

\end{theorem}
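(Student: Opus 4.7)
The plan is to prove the trichotomy by contradiction: assume $\mu$ is ergodic $u$-Gibbs but not SRB (so (1) fails) and that its support is not a finite union of $2$-tori tangent to $E^{ss}\oplus E^{uu}$ (so (3) fails); the goal is then to force the holonomy invariance statement (2). The argument follows the measure-rigidity / exponential drift scheme of Eskin--Mirzakhani, as adapted to the smooth deterministic setting by Brown--Rodriguez Hertz, using essentially the rigid skew-product structure, the $2$-center bunching condition (b), and the H\"older exponent estimate (c).

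First, I would set up the relevant cocycles and reformulate (2) in a workable form. The $2$-center bunching makes the unstable holonomies $H^u_{p,q}$ be $C^{1+}$ when restricted to the center leaves, so the linear holonomy $DH^u_{p,q}\colon E^c(p)\to E^c(q)$ is a well-defined H\"older cocycle along $\mathcal{F}^{uu}$; the domination inside $E^c$ lets it act on the projective bundle of $E^c$, and statement (2) is precisely the $\mathcal{F}^{uu}$-invariance of the section given by the $E^{ws}$ line on a full $\mu$-measure set. On the measure side, the Ledrappier--Young entropy formula reduces ``$u$-Gibbs but not SRB'' to the assertion that the conditional of $\mu$ along a full unstable plaque, once quotiented by $\mathcal{F}^{uu}$, is singular with respect to Lebesgue in the transverse weak-unstable direction.

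The heart of the proof is a synchronization argument along strong stable leaves. Take $\mu$-generic $p$ and a nearby $q\in W^{ss}(p)\setminus\{p\}$, transport the $u$-Gibbs conditional on $W^{uu}(p)$ to $W^{uu}(q)$ via the $C^1$ stable holonomy, and pull back by $f^{-n}$. The strong-stable distance contracts at rate $\chi^{ss}_+$, while the defect between the two strong-unstable plaques measured in the surrounding weak-unstable direction evolves at a rate governed by the central cocycle $\chi^{ws}$; condition (c) is exactly what is needed to select scales along which this defect stays bounded and non-degenerate. Passing to a limit yields an infinitesimal translation symmetry of the $\mu$-conditional with generator $v\in E^c(p)$, and a direct computation identifies $v$, up to the linear holonomy, with the image of a vector under $DH^u_{p,q'}$ for some $q'\in W^{uu}(p)$. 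Either $v\in E^{ws}$ --- which, after varying $q$, forces exactly the invariance demanded in (2) --- or $v$ has a non-zero component in $E^{wu}$, in which case the translation symmetry is transverse to $E^{uu}$ inside the unstable direction and, combined with the $u$-Gibbs property, upgrades $\mu$ to an absolutely continuous measure along $W^u$, contradicting the standing assumption. Thus we land in case (2).

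The main obstacle, and where case (3) actually enters, is the degenerate regime in which the drift from every nearby $q\in W^{ss}(p)$ produces only the trivial translation. Here there is no fresh invariance to exploit, and one must argue geometrically that $E^{ss}\oplus E^{uu}$ is tangent to an $f$-invariant $2$-dimensional lamination supported on $\supp \mu$, then upgrade this measurable integrability statement to an honest finite union of compact leaves. This upgrade is the technically delicate step: it uses the rigid skew-product setting, which forces the base projection of $\mu$ to coincide with the unique SRB measure of $f_1$ and hence to have a local product structure; combined with the saturation of $\supp\mu$ by both $\mathcal{F}^{ss}$ and $\mathcal{F}^{uu}$, this lets one run a closing / Livsic-style argument promoting a.e. integrability to a closed integral $2$-surface. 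The resulting invariant surface is compact, saturated by two Anosov foliations with linear holonomy, hence a $2$-torus tangent to $E^{ss}\oplus E^{uu}$; finiteness follows from the ergodicity of $\mu$. Making this last chain of reductions airtight --- in particular verifying that ``no drift'' truly forces the integrability, rather than some weaker statement --- is where the bulk of the work lies.
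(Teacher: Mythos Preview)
This theorem is not proved in the present paper: it is quoted verbatim from \cite{obata} and used as a black box (see the line ``The main ingredient in the proof of our Theorem~A is the following result'' immediately preceding the statement). There is therefore no in-paper proof to compare your proposal against.

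That said, your sketch is a faithful high-level summary of the Brown--Rodriguez~Hertz / Eskin--Mirzakhani measure-rigidity scheme that underlies \cite{obata}, and you have correctly identified where each of the structural hypotheses enters: the $2$-center bunching~(b) for $C^1$ linear holonomies, condition~(c) for the exponent matching in the drift argument, and the rigid skew-product hypothesis for the local product structure of the projected measure needed in the degenerate case leading to~(3). The one place where your outline is genuinely schematic rather than a proof is the last paragraph: showing that ``no drift'' forces joint integrability of $E^{ss}\oplus E^{uu}$ on $\supp\mu$, and then promoting this to finitely many closed $2$-tori, is the substantial technical core of \cite{obata}, and your description (``closing / Livsic-style argument'') does not yet capture the mechanism used there. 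If you intend this as an actual proof rather than a roadmap, that step needs to be filled in; otherwise, since the paper itself treats the theorem as an imported result, a citation suffices.
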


\section{Proof of  the Theorem A}\label{section.proofa}

Let $f\in \mathcal{A}^r$ and fix points $q_u, p_1, p_2, p_3 \in \tor^2$ which are periodic for $f_1$. Fix points $q_i \in W^{u}(q_u,f_1) \cap W^s(p_i,f_1)$, for $i=1, \cdots, 3$.  For each $i,j \in \{u,1,2,3\}$, with $i \neq j$, let $H^u_{i,j,f}: W^c(q_i) \to W^c(q_j)$  be the unstable holonomy map between the center leaves of $q_i$ and $q_j$. See figure~\ref{figure-config}. 

\begin{figure}[h]
\centering
\includegraphics[width= 0.4\textwidth]{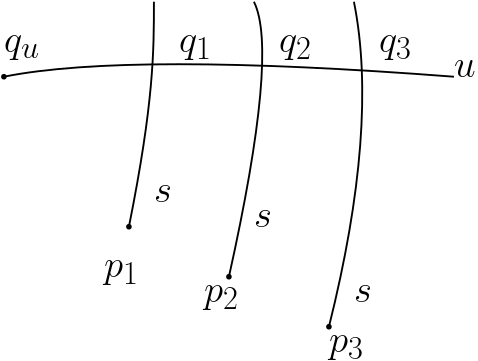}
\caption{Definition of $q_1,q_2,q_3$}
\label{figure-config}
\end{figure}

The main technical tool in our proof is given by the following proposition.
\begin{proposition}\label{prop.mainprop}
Let $f\in \mathcal{A}^r$, for $r\geq 3$, and fix $q_u, p_1, p_2,p_3, q_1, q_2, q_3\in \tor^2$ as above. Then for any $\varepsilon>0$, there exists $g\in \mathcal{A}^r$, with $d_{C^r}(f,g)<\varepsilon $, that verifies the following properties:
\begin{enumerate}

\item there are neighborhoods $V_l$ of $q_l$ in $\tor^2$, for $l=u, 1,2,3$,  such that  if $ x_u, x_1, x_2, x_3\in \tor^2$ are points verifying:
\begin{itemize}
\item each point $x_l$ belongs to $V_l$, for $l=u,1,2,3$, and
\item the points $x_u,x_1,x_2,x_3$ belong to the same unstable manifold for $g_1$,
\end{itemize}
then for any point $x\in \tor^2$, there exist $i,j\in \{u,1,2,3\}$ such that 
\[
DH^u_{x_j,x_l,g}(x)E^{ws}_g(x) \neq E^{ws}_g(H^u_{x_j,x_l,g}(x)).
\]
\item For any rigid skew product $h$ in a $C^2$-neighborhood of $g$, there are points $\tilde{p}_u, \tilde{q}_1, \tilde{q}_2, \tilde{q}_3\in \tor^2$ in the same unstable manifold for $h_1$ that verify property (1) above.
\end{enumerate}

\end{proposition}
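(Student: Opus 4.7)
I would construct $g$ through three successive $C^r$-small perturbations of $f$, each localized in a small neighborhood of one of the heteroclinic points $q_1,q_2,q_3$, chosen so that the three linear unstable holonomies $DH^u_{x_u,x_i,g}$, $i=1,2,3$, can be moved essentially independently. For $g\in\mathcal{A}^r$ close to $f$, let $x_u,x_1,x_2,x_3$ denote the hyperbolic continuations on a common $g_1$-unstable leaf, and for $x$ in the center leaf $W^c(x_u)\cong\tor^2$ set
\[
\varphi_i^g(x):=\operatorname{angle}\bigl(DH^u_{x_u,x_i,g}(x)\,E^{ws}_g(x),\;E^{ws}_g(H^u_{x_u,x_i,g}(x))\bigr).
\]
Conclusion~(1) is then equivalent to $Z_1^g\cap Z_2^g\cap Z_3^g=\emptyset$, where $Z_i^g:=(\varphi_i^g)^{-1}(0)$.

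The first input is that the three holonomies can be decoupled by the choice of perturbation support. Picking pairwise disjoint open neighborhoods $U_1,U_2,U_3\subset\tor^4$ of $q_1,q_2,q_3$ whose forward $f$-orbits remain pairwise disjoint and disjoint from a neighborhood of the orbit of $p_u$, one uses the graph-transform representation $H^u_{p,q,f}=\lim_n f^n\circ f^{-n}|_{W^c(p)}$ from Section~\ref{subsec.holonomies} to see that a $C^r$-small perturbation supported in $U_i$ modifies $H^u_{x_u,x_i}$ on the full center leaf $W^c(x_u)$, while perturbing the two other holonomies and the splitting $E^{ss}\oplus E^{ws}\oplus E^{wu}\oplus E^{uu}$ along the relevant orbits by errors arbitrarily smaller than the perturbation. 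Hence $\varphi_i^g$ can be altered while leaving $\varphi_j^g$, $j\neq i$, essentially unchanged.

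The main obstacle, and what I expect to be the technical heart of the argument (Section~\ref{section.transversality}), is to show that for each $i$ an appropriate family of such localized perturbations produces essentially arbitrary infinitesimal variations of $\varphi_i^g$ on any prescribed finite set of points of $W^c(x_u)$. A natural model is a fiberwise twist $(p_1,p_2)\mapsto(p_1,p_2+\rho_i(p_1,p_2))$ with $\rho_i$ supported in $U_i$; one must propagate this twist through the graph-transform limit and verify that it genuinely rotates $DH^u_{x_u,x_i,g}\,E^{ws}_g(x)$ relative to the target direction $E^{ws}_g(H^u_{x_u,x_i,g}(x))$, rather than rotating both in tandem. Conditions~(b) and~(c) on the hyperbolic exponents intervene here, through the regularity estimates required to pass to the limit and compare the two directions.

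Granting such transversality, the proof concludes in three steps, ordered so that each perturbation is much smaller than the preceding one: (i) first perturb near $q_1$ to make $0$ a regular value of $\varphi_1^g$, so that $Z_1^g$ is either empty or a smooth $1$-submanifold of $\tor^2$; (ii) then perturb near $q_2$ to make $0$ a regular value of $\varphi_2^g$ and the two submanifolds $Z_1^g,Z_2^g$ transverse, so that $F:=Z_1^g\cap Z_2^g$ is finite; (iii) finally perturb near $q_3$ to arrange $\varphi_3^g\neq 0$ at every point of $F$. The size ordering ensures that the previously achieved regular-value and transversality conditions survive each new perturbation. All three conditions are open in the $C^2$ topology and, together with the $C^2$-continuity of $E^{ws}$ and of the linear unstable holonomies recalled in Section~\ref{subsec.holonomies} and the hyperbolic continuation of $p_u,q_1,q_2,q_3$ for any $h$ that is $C^2$-close to $g$, this yields both the robustness of~(1) for $g$ and the existence of the continuations $\tilde{p}_u,\tilde{q}_1,\tilde{q}_2,\tilde{q}_3$ required by~(2).
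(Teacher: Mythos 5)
Your plan correctly identifies the overall structure --- localized perturbations near the heteroclinic points, together with a Thom-type transversality argument to make $Z_1\cap Z_2$ finite and then arrange $Z_3$ to miss it --- and you correctly flag the technical heart: showing that a perturbation supported near $q_i$ produces the needed variations of $\varphi_i^g$ without moving the other angle functions, \emph{and} without rotating the two sides of the angle in tandem. But you then grant exactly this point as a hypothesis. As stated, the ``fiberwise twist propagated through the graph-transform limit'' is not a proof; it is precisely the step that requires real work. Passing a $C^r$-small local perturbation through the infinite-time limit defining $H^u$ and isolating its first-order effect on an angle between two sections that both depend on the perturbation is delicate, and nothing in your outline shows how to do it.

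The paper sidesteps all of this by a cleaner factorization, and this is the key idea your proposal is missing. It first proves a purely geometric lemma (Proposition~\ref{prop.perturbingvf}): for $C^{1+\alpha}$ line fields $W, V_1, V_2, V_3$ on $\tor^2$ there exist $C^r$-small diffeomorphisms $h_1,h_2,h_3$ of $\tor^2$ with $\{W(x)\}\cap\{(h_1)_*V_1(x)\}\cap\{(h_2)_*V_2(x)\}\cap\{(h_3)_*V_3(x)\}=\emptyset$ for all $x$. This is a soft application of Theorem~\ref{th.hirsch.trans} in which the parameter family consists of explicit bump-function rotations of $\tor^2$; no dynamics enters. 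To realize the pushforward by $h_i$ dynamically, one then sets $W=E^{ws}_{p_u,f}$ and $V_i = (H^u_{i,u,f})_*E^{ws}_{q_i,f}$, and perturbs $f$ only on the center fiber $W^c(q_i)$ via the explicit formula $\hat{g}_i = \hat{f}_i \circ H^u_{u,i,f}\circ h_i^{-1}\circ H^u_{i,u,f}$. Because the support lies on $W^c(q_i)$, both the line field $E^{ws}$ on $W^c(g_1(q_i))$ (determined by the forward orbit, which is unchanged) and the holonomy $H^u_{u,i}$ (determined by the backward orbit, which is unchanged) are untouched, and a one-line algebraic computation shows that the holonomy-pushforward of $E^{ws}_{q_i,g}$ to $W^c(p_u)$ becomes exactly $(h_i)_*V_i$. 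No limit is taken, no first-order estimate is required, and the independence of the three perturbations is automatic because they act on disjoint fibers. Your estimate-based route might perhaps be made rigorous, but as written the crucial transversality input is an unproved assumption, whereas the paper's conjugation-by-holonomy trick makes it an identity.
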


\begin{figure}[h]
\centering
\includegraphics[width= 0.6\textwidth]{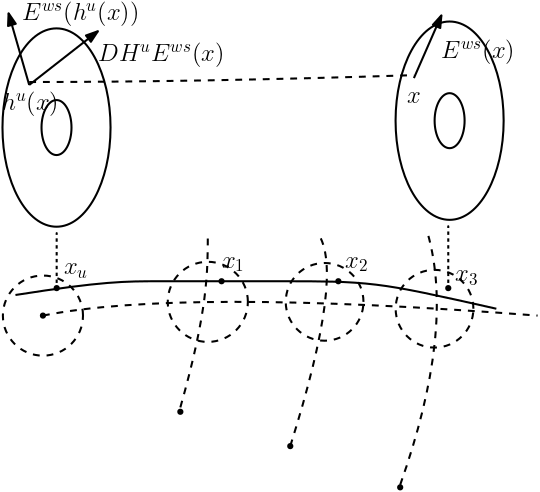}
\caption{Proposition~\ref{prop.mainprop}}
\label{figure-prop}
\end{figure}

Proposition \ref{prop.mainprop} will be proved in Section \ref{section.proofprop}. 

\begin{proof}[Proof of Theorem A assuming Proposition \ref{prop.mainprop}]

By Horita-Sambarino \cite{horitasambarino}, there exists a $C^1$-open and $C^r$-dense subset $\mathcal{U}_1$ of $\mathcal{A}^r$ consisting of diffeomorphisms which are accessible. Let $f\in \mathcal{U}_1$ and let $g \in \mathcal{U}_1$ be a diffeomorphism $C^r$-close to $f$ given by Proposition \ref{prop.mainprop}.  In particular, there are points $q_u, q_1, q_2, q_3\in \tor^2$ in the same unstable manifold for $g_1$ and neighborhoods $V_i$ of $q_i$, for $i=u,1,2,3$, such that conclusion (1) from Proposition \ref{prop.mainprop} holds.

Let $\mu$ be an ergodic $u$-Gibbs measure. Since $g$ is accessible, $\mu$ cannot verify condition (3) in Theorem \ref{thm.urigidityanosov}. 

Fix $x \in \tor^4$ a typical point for $\mu$. Notice that $\pi_1(W^{uu}(x,g)) = W^u(x_1,g_1)$ and that $W^u(x_1,g_1)$ is dense in $\tor^2$ by the minimality of the unstable foliation of an Anosov diffeomorphism in $\tor^2$. 

Since $\mu$ is a $u$-Gibbs measure, we can find points $x_i = (x_1^i, x_2^i)\in W^{uu}(x,g)$ which are $\mu$-typical and such that $x_1^i \in V_i$ for $i=u,1,2,3$. Property (1) of Proposition \ref{prop.mainprop} implies that there exist $j,l \in \{u,1,2,3\}$ such that
\[
DH^u_{x_i,x_j,g}(p_j). E^{ws}_g(x_j) \neq E^{ws}_g(x_l).
\]
By Theorem \ref{thm.urigidityanosov}, $\mu$ is SRB. Since $g$ is a transitive Anosov system, there is only one SRB measure and hence $\mu$ is the unique SRB measure and it has full basin (see \cite{bowenbook}).  

Item (2) in Proposition \ref{prop.mainprop}, implies that this conclusion is $C^2$-open in $\mathcal{A}^r$. Thus, we obtain a set $\mathcal{U} \subset \mathcal{U}_1$ which is $C^2$-open and $C^r$-dense in $\mathcal{A}^r$ for which the conclusion of Theorem A holds.

\end{proof}

\section{Transversality } \label{section.transversality}
 
Let $\mathbb{P}T\tor^2$ be the projectivization of the tangent bundle of $\tor^2$.  Observe that $T\tor^2$ is a trivial bundle and it can be identified with $\tor^2 \times \mathbb{R}^2$. In particular, $\mathbb{P}T\tor^2$ can be identified with $\tor^2 \times S^1 = \tor^3$.  From now on we will use this identification.

Given a $C^{1+\alpha}$ function $V: \tor^2 \to S^1$, there is a one to one correspondence  with a section of the projective bundle $\mathbb{P}T\tor^2$ given by $x\mapsto (x,V(x))$.  We will call such a function $V$ a \emph{line field}.

A $C^r$-diffeomorphism  $f:\tor^2\to \tor^2$, acts as a $C^{r-1}$-diffeomorphism on $\mathbb{P}T\tor^2$. Given a $C^{1+\alpha}$ line field $V$, we denote by $f_*V$ the pushforward of $V$, which is the line field given by $f_*V(x) = [Df(f^{-1}(x))v(f^{-1}(x))]$, where $[w]$ denotes the projective class of a nonzero vector $w$ and $v(f^{-1}(x))$ is any nonzero vector contained in the direction $V(f^{-1}(x))$.  

The main goal of this section is to prove the following proposition:

\begin{proposition}\label{prop.perturbingvf}
Fix $r\geq 3$. For any $C^{1+\alpha}$ line fields  $W,V_1,V_2,V_3:\tor^2 \to S^1$, there are $C^r$-diffeomorphisms of $\tor^2$,  $h_1,h_2,h_3$,  arbitrarily $C^r$ close to the identity such that $$\{W(x)\}\cap \{(h_1)_*V_1(x)\}\cap \{(h_2)_*V_2(x)\} \cap \{(h_3)_*V_3(x)\}=\emptyset, \textrm{ }  \forall x\in \tor^2.$$
\end{proposition}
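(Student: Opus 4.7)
The plan is to recast the conclusion as an avoidance problem and solve it via parametric transversality. For each $i$, define the signed-angle function $F_i^{h_i}(x) = \angle(W(x), (h_i)_*V_i(x)) \in \mathbb{R}/\pi\mathbb{Z}$; then the intersection $\{W(x)\} \cap \{(h_1)_*V_1(x)\} \cap \{(h_2)_*V_2(x)\} \cap \{(h_3)_*V_3(x)\}$ is nonempty if and only if $F_1^{h_1}(x) = F_2^{h_2}(x) = F_3^{h_3}(x) = 0$. So the proposition amounts to finding a triple $(h_1, h_2, h_3)$ arbitrarily $C^r$-close to $(\id, \id, \id)$ such that the map
$$\Phi_{(h_1,h_2,h_3)}: \tor^2 \to (\mathbb{R}/\pi\mathbb{Z})^3, \qquad x \mapsto (F_1^{h_1}(x), F_2^{h_2}(x), F_3^{h_3}(x))$$
avoids the origin. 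Since $\dim \tor^2 = 2 < 3 = \mathrm{codim}\,\{(0,0,0)\}$, this will follow from transversality by a dimension count.

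The crucial input is a \emph{local flexibility} observation. Writing $h = \id + \epsilon\phi$ with $\phi(x_0) = 0$, one computes to first order in $\epsilon$
$$(h_*V)(x_0) = [(I + \epsilon D\phi(x_0))\,v(x_0)],$$
so choosing $D\phi(x_0)$ to act as a suitable small rotation matrix rotates the projective direction $(h_*V)(x_0)$ by any prescribed small angle. Applied independently to each $h_i$ via perturbations supported in a small neighborhood of a given $x$, this shows that the derivative of the global map $\Phi: \Dif^r(\tor^2)^3 \times \tor^2 \to (\mathbb{R}/\pi\mathbb{Z})^3$ in the $(h_1, h_2, h_3)$-directions surjects onto the $3$-dimensional target at every point of $\Phi^{-1}(0,0,0)$, so $\Phi$ is transverse to $\{(0,0,0)\}$. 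By the parametric transversality theorem of Abraham--Robbin, a residual set of triples in a $C^r$-neighborhood of the identity has $\Phi_{(h_1,h_2,h_3)}$ transverse to $\{(0,0,0)\}$; the dimension count then yields $\Phi_{(h_1,h_2,h_3)}^{-1}(0,0,0) = \emptyset$.

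The main obstacle I anticipate is technical: the regularity of $\Phi$ is limited by that of the line fields, so $\Phi$ is only $C^{1+\alpha}$ rather than $C^r$, and one must verify this suffices for the parametric transversality theorem. Fortunately $C^1$ regularity is enough in this setting since the negative codimension $\dim \tor^2 - \mathrm{codim}\,\{(0,0,0)\} = -1$ puts us well below the classical Sard threshold. If this technical point proves delicate, a more hands-on three-step alternative is available and would in effect prove the same thing one coordinate at a time: first perturb $h_1$ so that $S_1 := (F_1^{h_1})^{-1}(0)$ is a regular $1$-submanifold of $\tor^2$; then perturb $h_2$ so that $S_2 := (F_2^{h_2})^{-1}(0)$ is transverse to $S_1$, yielding a finite intersection $S_1 \cap S_2 = \{p_1, \dots, p_k\}$ by compactness of $\tor^2$; finally perturb $h_3$ by $C^r$-small bumps supported in disjoint neighborhoods of the $p_j$ to guarantee $(h_3)_*V_3(p_j) \neq W(p_j)$ for each $j$, so $p_j \notin S_3$ and $S_1 \cap S_2 \cap S_3 = \emptyset$.
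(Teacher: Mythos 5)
Your approach and the paper's are fundamentally the same: both reduce the problem to parametric transversality driven by localized rotations that independently tilt the projective directions. The differences are in packaging and in how the technical hypotheses are handled. The paper builds an explicitly \emph{finite-dimensional} family $\mathbb F_\Theta$ by composing time-$\theta_j$ rotations $R_{x_j,\theta_j}$ centered at a finite cover $\{x_1,\dots,x_k\}$, then applies Hirsch's parametric transversality theorem to a map $F:\tor^2\times\cP^2\to\tor^6$ transverse to the $4$-dimensional submanifold $A=\mathrm{graph}(W)\times\mathrm{graph}(W)$; since $2+4-6=0$, the bad set $Z=\{W=(h_1)_*V_1=(h_2)_*V_2\}$ is finite, and $h_3$ is then chosen by an elementary finite-avoidance step. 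Your main argument instead folds all three perturbations into one step, using the angle map $\Phi:\Dif^r(\tor^2)^3\times\tor^2\to(\mathbb R/\pi\mathbb Z)^3$ and the dimension count $2<3$; this is cleaner conceptually, but it uses the full group $\Dif^r(\tor^2)^3$ as parameter space. That is where the one genuine gap is: you would need to verify that $\Phi$ is $C^1$ as a map out of the Banach manifold $\Dif^r(\tor^2)^3$, which involves the inversion $h\mapsto h^{-1}$ and the only-$C^{1+\alpha}$ line fields, and you flag this as a ``technical obstacle'' without resolving it. The paper's finite-dimensional reduction is precisely what makes that issue disappear, since $(\theta,x)\mapsto R_{x_j,\theta}^{-1}(x)$ is smooth. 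Your ``more hands-on three-step alternative'' (perturb $h_1$ to regularize $S_1$, perturb $h_2$ to get $S_1\pitchfork S_2$ finite, perturb $h_3$ on the finite set) is sound, essentially matches the paper up to doing $(h_1,h_2)$ in two passes rather than one, and would be the safer version to write out; it just still needs the same finite-dimensional rotation family to avoid the $\Dif^r$ regularity issue.
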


To prove Proposition \ref{prop.perturbingvf}, we will need the following theorem, which is a consequence of Sard's Lemma.

\begin{theorem}[\cite{hirsch}, Theorem 2.7]\label{th.hirsch.trans}
Let $\cP,M,N$ be $C^r$ manifolds without boundary and $A\subset N$ a $C^r$ submanifold. Let $F:M \times \cP\to N$ be a $C^r$ map that satisfies the following conditions:
\begin{enumerate}
\item $F$ is transverse to $A$;
\item $r>\max\{0,dim M+ dim A-dim N\}$.
\end{enumerate}
Then the set 
$$\{\Theta\in \cP:F_\Theta \pitchfork A\}$$
is open and dense.
\end{theorem}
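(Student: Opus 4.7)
The strategy is to apply Theorem~\ref{th.hirsch.trans} to a carefully chosen map $F: \tor^2 \times \mathcal{P} \to (S^1)^3$. Identifying the fibers $\mathbb{P}T_x\tor^2 = S^1$ with the Lie group $\mathbb{R}/\pi\mathbb{Z}$ so that subtraction makes sense pointwise, I set, for $\Theta = (X_1, X_2, X_3)$ in a finite-dimensional parameter space $\mathcal{P}$ to be specified,
\[
F(x, \Theta) = \bigl( (h_1^{X_1})_* V_1(x) - W(x),\ (h_2^{X_2})_* V_2(x) - W(x),\ (h_3^{X_3})_* V_3(x) - W(x) \bigr),
\]
where $h_i^{X_i} = \phi_1^{X_i}$ is the time-$1$ flow of $X_i$. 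The desired conclusion is exactly $F_\Theta^{-1}(\{0\}) = \emptyset$ for some $\Theta$ near $0 \in \mathcal{P}$, where $0 \in (S^1)^3$ is the origin. The dimension count $\dim \tor^2 + \dim\{0\} - \dim(S^1)^3 = 2 + 0 - 3 = -1$ satisfies the hypothesis of Theorem~\ref{th.hirsch.trans} for any $r \geq 1$, and crucially implies that transversality $F_\Theta \pitchfork \{0\}$ automatically forces $F_\Theta^{-1}(\{0\}) = \emptyset$.

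I then build a finite-dimensional subspace $\mathcal{P} \subset (\mathcal{X}^r(\tor^2))^3$ containing $0$ rich enough for transversality. Using compactness of $\tor^2$, fix a finite atlas $\{(U_k, \varphi_k)\}_{k=1}^N$ together with bump functions $\psi_k$ equal to $1$ on sub-neighborhoods $U_k' \Subset U_k$ still covering $\tor^2$. Let $\mathcal{P}$ consist of all triples of vector fields of the form $\psi_k \cdot (a + L \cdot x)$ (read in local coordinates), as $(a, L) \in \mathbb{R}^2 \oplus \operatorname{Mat}_{2\times 2}(\mathbb{R})$ varies and the triple-index selects the slot. Then $\mathcal{P}$ is finite-dimensional, contains $0$, and enjoys the following independent-control property: for every $x_0 \in \tor^2$, every $i \in \{1,2,3\}$, and every $(a_0, L_0) \in \mathbb{R}^2 \oplus \operatorname{Mat}_{2\times 2}(\mathbb{R})$, there is $\Theta \in \mathcal{P}$ with $X_j = 0$ for $j \neq i$ and $X_i(x_0) = a_0$, $DX_i(x_0) = L_0$.

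The transversality $F \pitchfork \{0\}$ reduces to an infinitesimal calculation at any zero $(x_0, \Theta_0)$. Lifting $V_i$ locally to a nowhere-vanishing smooth section $v_i:\tor^2 \to \mathbb{R}^2 \setminus \{0\}$ and differentiating at $t = 0$,
\[
\frac{d}{dt}\Big|_{t=0} (\phi_t^{X_i})_* V_i(x_0) = \bigl[ DX_i(x_0) v_i(x_0) - D v_i(x_0) X_i(x_0) \bigr] \pmod{\mathbb{R}\, v_i(x_0)},
\]
viewed in the one-dimensional quotient $T_{V_i(x_0)} S^1 \cong \mathbb{R}^2 / \mathbb{R} v_i(x_0)$. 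Choosing $X_i \in \mathcal{P}$ with $X_i(x_0) = 0$ and $DX_i(x_0)$ such that $DX_i(x_0)v_i(x_0)$ is any prescribed vector, the right-hand side attains any value in the quotient. Since varying $X_i$ alone affects only the $i$-th component of $F$, the three slots act independently, and $\partial_\Theta F$ surjects onto $\mathbb{R}^3 = T_0 (S^1)^3$ at every zero of $F$; this is exactly $F \pitchfork \{0\}$.

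Finally, Theorem~\ref{th.hirsch.trans} yields that $\{\Theta \in \mathcal{P} : F_\Theta \pitchfork \{0\}\}$ is open and dense in $\mathcal{P}$; by the dimension count each such $\Theta$ satisfies $F_\Theta^{-1}(\{0\}) = \emptyset$. Density at $0 \in \mathcal{P}$ furnishes parameters of arbitrarily small $C^r$ norm, and continuity of $X \mapsto \phi_1^X$ in the $C^r$ topology produces diffeomorphisms $h_1, h_2, h_3$ arbitrarily $C^r$-close to $\operatorname{id}$ with the required property. The main obstacle I anticipate is the bookkeeping in the construction of $\mathcal{P}$ with uniform independent infinitesimal control of all three line fields over the compact manifold $\tor^2$; a minor technical issue is that $F$ is only $C^{1+\alpha}$ in $x$ (reflecting the regularity of $V_i, W$), but this still fulfils the hypothesis $r > \max\{0, -1\} = 0$ of Theorem~\ref{th.hirsch.trans}.
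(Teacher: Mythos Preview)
Your proposal is not a proof of Theorem~\ref{th.hirsch.trans} at all---that theorem is merely quoted from Hirsch and receives no proof in the paper. What you have written is a proof of Proposition~\ref{prop.perturbingvf}, which \emph{applies} Theorem~\ref{th.hirsch.trans}. Comparing your argument to the paper's proof of that proposition:

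Your route is genuinely different and in some ways cleaner. The paper takes $N=\tor^6$, $A=\operatorname{graph}(W)\times\operatorname{graph}(W)$, and parametrizes only perturbations of $V_1,V_2$ by localized rotations; the dimension count there is $2+4-6=0$, so transversality of $F_\Theta$ yields merely a \emph{finite} bad set $Z=\{W=(h_1)_*V_1=(h_2)_*V_2\}$, and a separate ad hoc step perturbs $h_3$ to avoid $W$ on the finite set $Z$. You instead target $N=(S^1)^3$, $A=\{0\}$, handle all three line fields simultaneously, and get dimension count $2+0-3=-1$, so transversality of $F_\Theta$ directly forces $F_\Theta^{-1}(\{0\})=\emptyset$. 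Your approach is more symmetric and avoids the two-step argument; the paper's choice of explicit rotations as parameters makes the surjectivity verification slightly more concrete (the $\theta$-derivative of a rotation acting on a line is visibly $1$).

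One point to tighten: your surjectivity computation of $\partial_\Theta F$ is carried out at $\Theta_0=0$ (you differentiate $(\phi_t^{X_i})_*V_i$ at $t=0$, i.e.\ at $h_i=\mathrm{id}$), but transversality of $F$ to $\{0\}$ must hold at \emph{every} zero $(x_0,\Theta_0)$. The fix is the same one the paper uses: once $\partial_\Theta F$ is surjective at all $(x_0,0)$, compactness of $\tor^2$ and continuity give an open neighborhood $\cP'\ni 0$ in $\cP$ on which $\partial_\Theta F$ remains surjective, and you then apply Theorem~\ref{th.hirsch.trans} to $F|_{\tor^2\times\cP'}$. Your remark on regularity is correct: $F$ is jointly $C^1$ (the $x$-partials are $C^\alpha$, the $\Theta$-partials smooth), and $r=1>\max\{0,-1\}$ suffices.
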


\begin{proof}[Proof of Proposition \ref{prop.perturbingvf}]

Consider the $C^{1+\alpha}$-submanifold of $\tor^3$ given by the graph of $W$,  $graph(W)$, and let $A=graph(W)\times graph(W)$ the $4$-dimensional $C^{1+\alpha}$ submanifold of $\tor^6$. We are going to construct a $C^{1}$ map $F:\tor^2\times \cP\to \tor^6$, where $\cP$ is a set of parameters, such that $F$ is transverse to $A$.  We will then apply Theorem \ref{th.hirsch.trans} for $r=1$, $M = \tor^2$ and $N = \tor^6$.

Fix $\rho>0$ small. For each $x\in \tor^2$ and $\theta \in (-\pi,\pi)$ consider the smooth map $R_{x,\theta}: \tor^2 \to \tor^2$ that coincides with a rotation of angle $\theta$ and centered at $x$ in $B_{\rho}(x)$,  it is the identity outside $B_{2\rho}(x)$ and in a neighborhood of $\partial B_{2\rho}(x)$.  Observe that $R_{x,\theta}$ varies smoothly with the choice of $\theta$.  Indeed, $R_{x,\theta}$ can be obtained as the time $\theta$ flow of a smooth vector field on $\tor^2$. 

For each $x\in \tor^2$ and $i=1,2$, consider the map $\Phi_x^i(y,\theta) = (R_{x,\theta})_*V_i(y)$.  Since $R_{x,\theta}(y)$ is smooth with the choices of $y$ and $\theta$ and $V_i$ is $C^{1+\alpha}$, we have that $\Phi^i_x$ is $C^{1+\alpha}$, where $D\Phi^i_x$ is $(C',\alpha)$-H\"older for some constant $C'$ that can be chosen independently of $x$. 
Observe that $\Phi^i_x(x,\theta) =  DR_{x,\theta}(x) V_i(x)$, which is a rotation of $V_i(x)$ by an angle $\theta$. In particular,
\[
\displaystyle \frac{\partial \Phi^i_x}{\partial \theta}(x,0) = 1, \textrm{ for $i = 1,2.$}
\]
Since $D\Phi^i_x$ is $(C'.\alpha)$-H\"older, there exists a constant $\tilde{\rho} \in (0,\rho)$ such that for any $y\in B_{\tilde{\rho}}(x)$, we have $\frac{\partial \Phi^i_x}{\partial \theta}(y,0) >\frac{1}{2}$.  Consider a finite set of points $x_1, \cdots, x_k$ with the property that 
\[
\tor^2 = \displaystyle \bigcup_{j=1}^k B_{\tilde{\rho}}(x_j).
\]

%
%

%

Let $\cP=(-\pi,\pi)^k$ and for each $\Theta=(\theta_1,\dots,\theta_k)\in \cP$,  define $\mathbb{F}_\Theta:\tor^2\to \tor^2$ by
$$\mathbb{F}_\Theta(.)= R_{x_1,\theta_1} \circ \cdots \circ R_{x_k, \theta_k} (.) $$
and $F:\tor^2\times \cP^2\to \tor^3 \times \tor^3$ by
$$F(x,\Theta_1,\Theta_2)=(x,(\mathbb{F}_{\Theta_1})_*V_1(x), x, (\mathbb{F}_{\Theta_2})_*V_2(x)).$$
For $(\Theta_1, \Theta_2) \in \cP^2$ and for any $i\in \{1,2\}$, write $\theta_j^i$ the $j$-th coordinate of $\Theta_i$, for $j=1,\cdots, k$. 

First, let us check that $F$ is transverse to $A$ on $\tor^2\times \{0\}^{2k}$.  Recall  that $A$ is the product of two graphs. In particular, for each $(x,y)\in A$, 
\[
T_{(x,y)}A=T_x graph (W)\times T_y graph (W)
\]
and $graph (W)$ is transverse to $\{\pi_{\tor^2}(x)\}\times S^1$ at $x$, where $\pi_{\tor^2}:\tor^2 \times S^1 \to \tor^2$ is the natural projection.  Recall that the image of $F$ belongs to $\tor^3 \times \tor^3$ and let $F^1$  be the natural projection of $F$ into the first torus.  Similarly, define $F^2$ as the projection of $F$ into the second torus.  Hence,  for $F$ to be transverse to $A$ on $\tor^2 \times \{0\}^{2k}$,  it suffices that for any $(x_1, x_2) \in A$, where $\pi_{\tor^2}(x_1) = \pi_{\tor^2}(x_2) = x$ for some point $x\in \tor^2$, there exists $j,l\in \{1,\cdots , k\}$ such that the third component of the vectors $
\frac{\partial F^1}{\partial \theta^1_j}(x,0,0)$ and $ \frac{\partial F^2}{\partial \theta_l^2}(x,0,0)$ are both non zero.

Let us show this for $F^1$, the argument for $F^2$ is the same.  Let $x = \pi_{\tor^2}(x_1)$ and take $j$ such that $x\in B_{\tilde{\rho}}(x_j)$.  By our choice of $\tilde{\rho}$ and $j$, we have that 
\[
\frac{\partial F^1}{\theta^1_j}(x,0,0) = \left(0,0, \frac{\partial \Phi^1_{x_j}}{\partial \theta}(x,0)\right) \neq (0,0,0).
\]
Hence, $F$ is transverse to $A$ on $\tor^2 \times \{0\}^{2k}$.  Since transversality is open, there exists an open subset of $\cP'^2 \subset \cP^2$, containing $\{0\}^{2k}$ with the property that $F$ is transverse to $A$ restricted to $\tor^2 \times \cP'^2$.

%

Observe that $dim \tor^2 + dim A - dim \tor^6 = 2 + 4 - 6 = 0$. In particular $r=1$ is enough to apply Theorem \ref{th.hirsch.trans}.  Hence, there exists $(\Theta_1, \Theta_2)\in \cP'^2$ such that $F(., \Theta_1, \Theta_2):\tor^2\to \tor^6$ is transverse to $A$.  Take $h_1=\mathbb{F}_{\Theta_1}$ and $h_2=\mathbb{F}_{\Theta_2}$, and observe that since $(\Theta_1, \Theta_2)$ can be taken arbitrarily close to $\{0\}^{2k}$, the diffeomorphisms $h_1$ and $h_2$ can be taken arbitrarily $C^r$-close to the identity. 

Let $Z=\{x\in \tor^2:W(x)=(h_1)_*V_1(x)=(h_2)_*V_2(x)\}$, if $x\in Z$ we have that $F(x,\Theta_1,\Theta_2)\in A$,  and by transversality,  $Z$ is finite. Now it is straightforward to take $h_3$ such that $(h_3)_*V_3(x)\neq W(x)$ for every $x\in Z$, which concludes the proof of the proposition.  \qedhere

\end{proof}

\section{Proof of Proposition \ref{prop.mainprop}}  \label{section.proofprop}

Let $f\in \mathcal{A}^r$ and $q_u,p_1, p_2, p_3, q_1, q_2,q_3$ be as in the statement of Proposition \ref{prop.mainprop}, and fix $\varepsilon>0$.  For any $z\in \tor^4$, define $E^{ws}_{z,f} := E^{ws}|_{W^c(z)}$.

Let $W =  E^{ws}_{q_u,f}$ and $V_i = (H^u_{i,u,f})_* E^{ws}_{q_i,f}$ for $i=1,2,3$.   By Proposition \ref{prop.perturbingvf}, we can find diffeomorphisms $h_i:\tor^2 \to \tor^2$ arbitrarily $C^r$-close to the identity such that for every $x\in \tor^2$
\begin{equation}\label{eq.emptyintersection}
\{W(x) \} \cap \{ (h_1)_* V_1(x) \} \cap \{(h_2)_*V_2(x) \} \cap \{(h_3)_* V_3(x)\} = \emptyset.
\end{equation}

Write $\hat{f}_i = f|_{W^c(q_i)}$ and consider $\hat{g}_i:\tor^2 \to \tor^2$ given by
\[
\hat{g}_i = \hat{f}_i \circ H^u_{u, i, f} \circ h_i^{-1} \circ H^u_{i,u,f},
\]
for $i=1,2,3$.  Since $h_i$ can be taken arbitrarily $C^r$ close to the identity and $(H^u_{i,u,f})^{-1} = H^u_{u,i,f}$,  we can suppose that $\hat{g}_i$ is also $C^r$ close to $\hat{f}_i$.  Hence, we can perturb $f$ to obtain a  $C^r$ diffeomorphism $g$ with the following properties:
\begin{itemize}
\item $d_{C^r}(f,g) < \varepsilon$;
\item $g = f$ outside a small neighborhood $U$ of the fibers $W^c(q_i)$, for $i=1,2,3$. We can take $U$ small enough so that $U \cap (W^c(f_1^n(q_i)))_{n\in \mathbb{Z}} = W^c(q_i)$;
\item $g_1 = f_1$, in other words, the perturbation only happens in the fibers;
\item $g|_{W^c(q_i)} = \hat{g}_i$.
\end{itemize}

Observe that $E^{ws}_{g_1(q_i),g}$ only depends on the future iterates of $g|_{W^c(g_1^{n+1}(q_i))}$, for $n\geq 0$, which are the same as $f|_{W^c(f_1^{n+1}(q_i))}$ .  Therefore, 
\begin{equation}\label{eq.samews}
E^{ws}_{g_1(q_i), g} = E^{ws}_{f_1(q_i),f}.
\end{equation}

The unstable holonomy $H^u_{u, i,f}$ only depends on $f|_{W^c(f_1^{-n}(q_i))}$ and \linebreak $f|_{W^c(f_1^{-n}(q_u))}$, for $n\geq 0$, which coincides with $g|_{W^c(g_1^{-n}(q_i))}$ and \linebreak $g|_{W^c(g_1^{-n}(q_u))}$. Hence,
\begin{equation}\label{eq.sameuholonomy}
H^u_{u,i, f} = H^u_{u, i, g}, \textrm{ for $i=1,2,3$.}
\end{equation}

Thus, by \eqref{eq.samews} and \eqref{eq.sameuholonomy},
\[
\begin{array}{rcl}
(H^u_{i, u,g})_*(\hat{g}^{-1}_i)_* E^{ws}_{g_1(q_i),g} & = & (H^u_{i,u,g})_* (H^u_{u,i,f})_* (h_i)_*(H^u_{i,u,f})_* (\hat{f}^{-1}_i)_* E^{ws}_{g_1(q_i),g} \\
& = &(H^u_{i,u,f})_* (H^u_{u,i,f})_* (h_i)_*(H^u_{i,u,f})_* (\hat{f}^{-1}_i)_* E^{ws}_{f_1(q_i),f}\\
 & = & (h_i)_* (H^u_{i,u,f})_* E^{ws}_{q_1,f} \\
 & = & (h_i)_* V_i.
\end{array}
\]

Therefore, $g$ realizes the property \eqref{eq.emptyintersection}. Notice that this property is open, in particular, this implies item $(1)$ of Proposition \ref{prop.mainprop}. Since the direction $E^{ws}$ changes continuously with the diffeormorphism and the unstable holonomies $H^u$ changes continuously in the $C^1$-topology with the choice of $h$ (see Section \ref{subsec.holonomies}), item $(2)$ of Proposition \ref{prop.mainprop} follows.

\begin{remark}
As, we mentioned before, Theorem \ref{thm.urigidityanosov} holds for any $u$-Gibbs measure with one positive and one negative center exponent, where $E^{ws}$ is replaced by the corresponding Oseledets direction. 
The main technical properties that we need in the proof of Theorem~\ref{thm.maintheorem} are the $C^1$ regularity of $E^{ws}$ on the center manifolds over $q_u, p_1, p_2, p_3$ and the dominated splitting.

The same proof of Theorem A, gives the following result. 
\begin{theorem}\label{thm.technical}
Let $f\in \operatorname{Ph}^3$.  If $f_1$ has periodic points $q_u, p_1, p_2, p_3$ such that $E^{ws}$ is $C^1$ restricted to $W^c(p_i)$, for $i=u,1,2,3$.
Then there exists a neighborhood $\mathcal{U}$ of $f$ inside $\sk$,  and a subset $\mathcal{V} \subset \mathcal{U}$ which is $C^2$-open and $C^r$-dense in $\mathcal{U}$ with the following property.  For any $g\in \mathcal{V}$, any ergodic $u$-Gibbs measure for $g$ with one positive and one negative center exponent is SRB.
\end{theorem}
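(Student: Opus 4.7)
The plan is to imitate the proof of Theorem A, now using the variant of Theorem \ref{thm.urigidityanosov} pointed out in the preceding remark: for an ergodic $u$-Gibbs measure $\mu$ with one positive and one negative center Lyapunov exponent, the Oseledets direction of the negative center exponent (denote it $E^{cs}_{\mu}$) plays the role of $E^{ws}$, so either $\mu$ is SRB, or $DH^u$ preserves $E^{cs}_{\mu}$ on a set of full $\mu$-measure, or $\mathrm{supp}(\mu)$ is a finite disjoint union of $2$-tori tangent to $E^{ss}\oplus E^{uu}$.

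First I would verify that Proposition \ref{prop.mainprop} extends to this setting without change. Its proof uses only three inputs: the dominated splitting (part of the hypothesis), the $C^1$-regularity of the $u$-holonomies between center leaves (a consequence of $2$-center bunching, also assumed), and the $C^{1+\alpha}$-regularity of $E^{cs}$ along the four distinguished center fibers $W^c(p_u), W^c(p_1), W^c(p_2), W^c(p_3)$ (imposed explicitly in the hypotheses). The perturbations used in the proof are local around these fibers, leave $f_1$ unchanged, and preserve the dominated splitting and $2$-center bunching provided they are small enough in the $C^r$-topology; one therefore obtains a neighborhood $\mathcal{U} \subset \sk$ of $f$ on which the structural properties persist, and the same construction produces $g \in \mathcal{U}$ realizing the empty-intersection property on suitable neighborhoods $V_u, V_1, V_2, V_3$ of $q_u, q_1, q_2, q_3$.

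Next, intersect $\mathcal{U}$ with a $C^1$-open $C^r$-dense subset $\mathcal{U}_1 \subset \mathcal{U}$ on which $E^{ss}$ and $E^{uu}$ are not jointly integrable (a Horita--Sambarino type statement inside $\sk$, obtained by a local perturbation of $f_1$ near a periodic point disjoint from $p_u, p_1, p_2, p_3$); this eliminates alternative (3) of the rigidity theorem. For any $g$ in the resulting dense open set $\mathcal{V} \subset \mathcal{U}_1$ and any ergodic $u$-Gibbs $\mu$ with one positive and one negative center exponent, the argument of Theorem A goes through unchanged: since $g_1 = f_1$ is Anosov on $\tor^2$ its unstable foliation is minimal, so one selects $\mu$-typical points $x_u, x_1, x_2, x_3 \in W^{uu}(x,g)$ projecting into $V_u, V_1, V_2, V_3$ respectively; part (1) of Proposition \ref{prop.mainprop} then excludes alternative (2), forcing $\mu$ to be SRB. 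Part (2) of the same proposition, combined with the $C^2$-continuity of $E^{cs}$ on the four periodic fibers (which uses the periodicity of the $p_i$ together with $2$-center bunching) and the $C^1$-continuity of the $u$-holonomies, gives the $C^2$-openness of $\mathcal{V}$.

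The main obstacle is securing the non joint integrability genericity needed to exclude alternative (3): in the Anosov setting of Theorem A this was supplied by Horita--Sambarino's accessibility, and in the present partially hyperbolic rigid skew product framework one needs an analog that respects the $\sk$ fibered structure and does not spoil the $C^1$-regularity of $E^{cs}$ on the four periodic fibers used in the perturbation. Performing this secondary perturbation at a fifth periodic point of $f_1$ well separated from $p_u, p_1, p_2, p_3$ should accomplish both, but the explicit verification that the two perturbations can be made independently is the delicate point.
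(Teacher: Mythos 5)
Your plan is precisely the paper's: the authors prove Theorem \ref{thm.technical} by the single sentence ``the same proof of Theorem~A gives the following result,'' pointing only to the two ingredients you isolated (the dominated splitting and the $C^1$-regularity of the center-stable distribution along the four periodic center fibers, which is what Proposition~\ref{prop.perturbingvf} consumes). So the overall route is correct.

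Two cautions on the ``obstacle'' you flag at the end. First, non-joint-integrability of $E^{ss}$ and $E^{uu}$ at a point is strictly weaker than what is used to kill alternative~(3) of Theorem~\ref{thm.urigidityanosov}: a compact $2$-torus tangent to $E^{ss}\oplus E^{uu}$ could perfectly well survive somewhere else while the distributions are non-integrable at a chosen point. What kills alternative~(3) in the proof of Theorem~A is \emph{accessibility}, because a point of such a torus would have accessibility class contained in the torus. Second, the Horita--Sambarino theorem \cite{horitasambarino} is stated for partially hyperbolic skew products with two-dimensional center leaves, not only for Anosov ones; it gives a $C^1$-open, $C^r$-dense set of accessible systems in the whole class $\sk$ under consideration. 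Hence there is no need for a separate ``fifth periodic point'' gadget, and in any case that gadget as you describe it would not work: you propose to perturb $f_1$, i.e.\ the base, but accessibility of the skew product is governed by the $su$-paths inside the fibers, and perturbing the base alone does not break joint integrability of $E^{ss}$ and $E^{uu}$ in the fiber direction. The correct move is exactly the one from the proof of Theorem~A: take $f$ inside the Horita--Sambarino open-dense set (this can be arranged by a fiber perturbation supported away from the four distinguished fibers, so the $C^1$-regularity hypothesis on $E^{cs}|_{W^c(p_i)}$ is untouched), then apply Proposition~\ref{prop.mainprop}, and finally intersect the $C^2$-open set from part~(2) with the accessibility open set. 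One last small discrepancy to note: you write $C^{1+\alpha}$-regularity for $E^{cs}$ on the center fibers, which matches what Proposition~\ref{prop.perturbingvf} literally requires, while the theorem statement only asks for $C^1$; that regularity gap is present in the paper as well and one should either strengthen the hypothesis to $C^{1+\alpha}$ or check that Proposition~\ref{prop.perturbingvf} goes through with $C^1$ line fields (the uniform $\tilde\rho$ in its proof can also be obtained from uniform continuity of $D\Phi^i_x$ on a compact manifold, so $C^1$ does suffice, but this should be said).
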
 
For surface diffeomorphisms having a dominated splitting, in general, without any uniform contraction assumption on $E^{ws}$, this distribution is only H\"older continuous.
\end{remark}

\section{Minimality of the strong unstable foliation}\label{section.minimality}

Throughout this section,  let $\mathcal{U}$ be the $C^r$-dense and $C^2$-open set obtained in Theorem A and  fix $g\in \mathcal{U}$. The goal of this section is to prove that the strong unstable foliation of $g$ is minimal. 

\begin{lemma}
\label{lem.density}
For every $\varepsilon>0$, there exists $m = m(\varepsilon)$ such that for any $p\in \tor^4$ the set $\displaystyle \bigcup_{j=0}^m g^j(W^{uu}_{1}(p,g))$ is $\varepsilon$-dense in $\tor^4$.
\end{lemma}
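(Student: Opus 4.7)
The plan is to argue by contradiction, combining the uniqueness statement of Theorem A with the classical Pesin-Sinai principle that weak-$*$ limits of Ces\`aro averages of normalized Lebesgue measure on strong unstable disks are $u$-Gibbs measures. Assume the lemma fails. Then there exist $\varepsilon>0$ and sequences $m_k\to\infty$, $p_k\in\tor^4$ and $x_k\in\tor^4$ such that the ball $B_\varepsilon(x_k)$ is disjoint from $\bigcup_{j=0}^{m_k} g^j(W^{uu}_1(p_k,g))$. Letting $\nu_k$ denote the normalized Riemannian volume on the disk $W^{uu}_1(p_k,g)$, I form the averaged measures
\[
\mu_k = \frac{1}{m_k+1}\sum_{j=0}^{m_k} (g^j)_*\nu_k,
\]
which satisfy $\mu_k(B_\varepsilon(x_k))=0$ by construction.

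Passing to a subsequence one may assume $x_k\to x_\infty$ and $\mu_k\to\mu_\infty$ in the weak-$*$ topology. The measure $\mu_\infty$ is $g$-invariant (as a limit of Ces\`aro averages) and, by the Pesin-Sinai argument (see e.g.\ \cite{bdv}), is a $u$-Gibbs measure. The portmanteau theorem applied to the open set $B_{\varepsilon/2}(x_\infty)\subset B_\varepsilon(x_k)$ for $k$ large then gives $\mu_\infty(B_{\varepsilon/2}(x_\infty))\le\liminf_k\mu_k(B_\varepsilon(x_k))=0$. However, since $g\in\mathcal{U}$, Theorem A guarantees that $g$ has a unique $u$-Gibbs measure, which coincides with its SRB measure $\mu^{\mathrm{SRB}}$; hence $\mu_\infty=\mu^{\mathrm{SRB}}$. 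Since $g$ is a transitive Anosov diffeomorphism on $\tor^4$, this SRB measure has full support, so $\mu^{\mathrm{SRB}}(B_{\varepsilon/2}(x_\infty))>0$, contradicting the previous inequality and proving the lemma.

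The only step that requires some care is the Pesin-Sinai statement itself, namely that any weak-$*$ accumulation point of such Ces\`aro averages along strong unstable disks is a genuine $u$-Gibbs measure. This follows from the classical bounded distortion argument for the Jacobian of $Dg|_{E^{uu}}$ together with the absolute continuity of the $\mathcal{F}^{uu}$-holonomies inside center-unstable leaves, which is standard under the center bunching assumption (b) in the definition of $\mathcal{A}^r$. Everything else is a straightforward compactness argument together with the two key inputs: uniqueness of the $u$-Gibbs measure (Theorem A) and full support of the SRB measure of a transitive Anosov system.
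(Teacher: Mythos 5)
Your argument is correct and follows essentially the same strategy as the paper's proof: contradict the statement, form Ces\`aro averages of normalized Lebesgue measure on strong unstable disks, invoke the Pesin--Sinai principle to recognize weak-$*$ limits as $u$-Gibbs, use uniqueness (Theorem A) to identify the limit with the SRB measure, and derive a contradiction from its full support via portmanteau. The one structural difference is minor: the paper first passes to a single limit point $p=\lim p_n$, observes that the full forward orbit $\bigcup_{j\ge 0} g^j(W^{uu}_1(p,g))$ avoids $B(q,\varepsilon/2)$, and only then takes Ces\`aro averages from the fixed disk $W^{uu}_1(p,g)$; you instead diagonalize, forming $\mu_k$ from the varying disks $W^{uu}_1(p_k,g)$ with the varying lengths $m_k$ and taking a weak-$*$ limit directly. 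Your version is arguably a little cleaner since it avoids the paper's ``easy to see'' step about the limiting orbit union, at the cost of needing the Pesin--Sinai statement uniformly over base points, which indeed holds by uniform bounded distortion. One small inaccuracy worth flagging: you do not need the center bunching hypothesis or any regularity of holonomies for Pesin--Sinai; bounded distortion of $Dg$ along $E^{uu}$, available for any $C^{1+\alpha}$ partially hyperbolic system, suffices.
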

\begin{proof}
Suppose not, then there exist $\varepsilon>0$, sequence of points $(p_n)_{n\in \mathbb{N}}$, $(q_n )_{n\in \mathbb{N}}$ and a sequence of natural numbers $(k_n)_{n\in \mathbb{N}}$,  with $k_n \to +\infty$, such that
\[
\displaystyle \left( \bigcup_{j=0}^{k_n} g^j(W^{uu}_1(p_n,g)) \right) \cap B(q_n, \varepsilon) = \emptyset. 
\]
Without loss of generality, we can suppose that  there are points $p$ and $q$ such that $\lim_{n\to +\infty} p_n = p$ and $\lim_{n\to +\infty} q_n = q$.  It is easy to see that 
\begin{equation}\label{eq.nointersection}
\displaystyle \left( \bigcup_{j=0}^{+\infty} g^j(W^{uu}_1(p,g)) \right)  \cap B(q, \varepsilon/2) = \emptyset.
\end{equation}
Let $\mu$ be the unique $u$-Gibbs measure of $g$, by Theorem A, and since $\mathrm{supp}(\mu) = \tor^4$ we conclude that $\mu(B(q,\varepsilon/2))>0$. For each $n \in \mathbb{N}$, consider
\[
\mu_n:= \displaystyle \frac{1}{n} \sum_{j=0}^{n-1} g^j_*(m^u_p),
\]
where $m^u_p$ is the normalized Lebesgue measure on $W^{uu}_1(p,g)$.  By Theorem A, $\lim_{n\to +\infty} \mu_n = \mu$. Since $B(q, \varepsilon/2)$ is an open set, we obtain that 
$\liminf_{n\to +\infty} \mu_n(B(q,\varepsilon/2)) \geq \mu(B(q,\varepsilon/2)) >0.$ In particular, for $n$ large enough,
$\mu_n(B(q,\varepsilon/2))>0$. This is a contradiction with \eqref{eq.nointersection}. \qedhere
\end{proof}

Recall that the set $\mathcal{U}$ of Theorem A was obtained by  applying Proposition \ref{prop.mainprop}. In particular, we can assume that there are points $q_u, q_1, q_2,q_3 \in \tor^2$ that verify the conclusion of Proposition \ref{prop.mainprop}.  For every $q \in W^u(q_u,f_1)$ and  $x\in \tor^2$, consider the angle function 
\[
\alpha(x,q) := \angle (E^{ws}_{q_u}(x), DH^{u}_{q,q_u}(H^u_{q_u,q}(x))E^{ws}_q(H^u_{q_u,q}(x))).
\]
Define 
\[
\alpha(x):= \max\{\alpha(x,q_1), \alpha(x,q_2), \alpha(x,q_3)\}. 
\]
Since the unstable holonomy $H^u$ between center manifold is $C^1$ and the direction $E^{ws}$ is continuous, the function $\alpha(\cdot)$ is continuous. Moreover, by the conclusion of Proposition \ref{prop.mainprop}, for every $x\in \tor^2$, $\alpha(x)>0$. By compactness, there exists a constant $\alpha>0$ such that $\alpha(x)\geq \alpha$ for every $x\in \tor^2$.

Fix an orientation of $W^u_R(q_u,g_1)$.  Without loss of generality, we may suppose that $q_u <q_1<q_2<q_3$, we have the freedom to choose the points $q_i$ before performing the perturbation of Proposition \ref{prop.mainprop}.  
Let $R =d^u(q_u,q_3)$, this is the unstable distance between $q_u$ and $q_i$.
For each $x\in \tor^2$, let $q_x \in W^u_R(q_u,g_1)$ be the smallest $q>q_u$ such that $\alpha(x,q_x) = \alpha$.  Consider the set 
\[
\mathcal{D}_x := \{H^u_{q,q_u}(W^{ws}_{r_0}(H^u_{q_u,q}(x))): q_u \leq q \leq q_x\},
\] 
for some $r_0>0$.

\begin{lemma}
\label{lem.uniforminterior}
There exist  constants $\varepsilon_c>0$ and $r_0>0$ with the following property: for any $x\in \tor^2$ the set $\mathcal{D}_x$ contains a ball of radius $\varepsilon_c$, $B(y_x,\varepsilon_c)$, for some point $y_x\in \mathcal{D}_x$.  Moreover,  we can take $B(y_x,\varepsilon_c)$ such that the distance between the boundary of this ball and the boundary of $\mathcal{D}_x$ is greater than $2\varepsilon_c$. 
\end{lemma}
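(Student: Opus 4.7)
The plan is to exploit the fact that every curve in $\mathcal{D}_x$ passes through the common vertex $x$, while the family of their tangent directions at $x$ sweeps out an arc of angular width at least $\alpha$ (uniform in $x$) inside $T_xW^c(p_u)$. As a result $\mathcal{D}_x$ Hausdorff-approximates a Euclidean sector of opening $\alpha$ based at $x$, inside which one can inscribe a ball of uniform radius. More precisely, we write each curve in $\mathcal{D}_x$ as
\[
\beta_{x,q}(s) = H^u_{q,p_u}\bigl(\phi^{ws}_s(H^u_{p_u,q}(x))\bigr),\quad s\in[-r_0,r_0],
\]
where $\phi^{ws}_s$ denotes the arc-length flow along the weak-stable foliation inside $W^c(q)$. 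Then $\beta_{x,q}(0)=x$ for every $q$, because $H^u_{q,p_u}\circ H^u_{p_u,q}=\mathrm{id}$, whereas
\[
T(q):=\beta'_{x,q}(0) = DH^u_{q,p_u}(H^u_{p_u,q}(x))\,E^{ws}(H^u_{p_u,q}(x))
\]
depends continuously on $q$, equals $E^{ws}(x)$ at $q=p_u$, and makes angle exactly $\alpha$ with $E^{ws}(x)$ at $q=q_x$ by the very definition of $q_x$. The intermediate value theorem applied to a continuous lift of the unit tangent $\hat T(q):=T(q)/\|T(q)\|$ will then yield, for every $\theta\in[0,\alpha]$, some $q\in[p_u,q_x]$ with $\hat T(q)$ at angle $\theta$ from $E^{ws}(x)$.

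By the $2$-center-bunching assumption $H^u$ is $C^1$ with uniform $C^1$-bounds, and the weak-stable foliation in the two-dimensional center fibers is at least $C^1$; together with compactness of $\tor^4$, these facts provide a uniform modulus of continuity for $s\mapsto\beta'_{x,q}(s)$. After reparameterizing each $\beta_{x,q}$ by arc length in $W^c(p_u)$, we obtain curves $\tilde\beta_{x,q}$ defined on a uniform interval $[-r_0,r_0]$; for every $\eta>0$ we may shrink $r_0=r_0(\eta)>0$ uniformly in $x$ and $q$ so that, in a chart around $x$,
\[
\bigl|\tilde\beta_{x,q}(u)-(x+u\,\hat T(q))\bigr|\le\eta\,|u|\quad\text{for } |u|\le r_0.
\]
Setting $\Omega^+:=[p_u,q_x]\times[0,r_0]$, the continuous maps $G(q,u):=\tilde\beta_{x,q}(u)$ and $G_0(q,u):=x+u\,\hat T(q)$ on $\Omega^+$ then satisfy $\|G-G_0\|_{C^0(\Omega^+)}\le\eta r_0$ and $G(\Omega^+)\subset\mathcal{D}_x$.

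Fix an orthonormal frame $(e_1,e_2)$ at $x$ with $e_1=E^{ws}(x)$ and set $u_\theta:=\cos\theta\,e_1+\sin\theta\,e_2$. The previous paragraph shows that $G_0(\Omega^+)$ contains the Euclidean sector $S^+_x:=\{x+u\,u_\theta : u\in[0,r_0],\,\theta\in[0,\alpha]\}$, whose inscribed disc has center $y_x$ and radius $R_0:=r_0\sin(\alpha/2)/(1+\sin(\alpha/2))$---both uniform in $x$ since $\alpha>0$ is uniform. A direct geometric computation gives $\mathrm{dist}(y_x,G_0(\partial\Omega^+))\ge R_0$ (the boundary image consists of the two straight sides of $S^+_x$, the arc $\{x+r_0\hat T(q)\}$ on the circle of radius $r_0$ around $x$, and the collapsed edge $\{x\}$, all at distance $\ge R_0$ from the incenter). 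Set $\varepsilon_c:=R_0/6$ and choose $\eta$ with $\eta r_0\le R_0/3$. A Brouwer-degree argument applied to the linear homotopy $G_t:=(1-t)G_0+tG$ then yields $B(y_x,R_0/2)\subset G(\Omega^+)\subset\mathcal{D}_x$: $G_0|_{\partial\Omega^+}$ has nonzero winding number around every interior point of $S^+_x$ (the collapsed edge $\{u=0\}\mapsto\{x\}$ contributing nothing), and the estimate $\|G_t-G_0\|_{C^0}\le\eta r_0<R_0/3$ keeps $G_t(\partial\Omega^+)$ at distance $\ge R_0/2-\eta r_0>0$ from $\overline{B(y_x,R_0/2)}$, so the degree is constant in $t$. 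Hence every $z\in B(y_x,\varepsilon_c)$ satisfies $d(z,\tor^2\setminus\mathcal{D}_x)\ge R_0/2-\varepsilon_c=3\varepsilon_c>2\varepsilon_c$, which is the ``moreover'' conclusion.

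The one delicate step is the Brouwer-degree computation in the last paragraph, specifically the verification that the image of $\partial\Omega^+$ stays away from the target ball throughout the homotopy (this is where both the uniform $C^1$-estimate on $\beta_{x,q}$ and the uniform lower bound on $\alpha$ enter essentially). Everything else reduces to quantitative plane geometry, and compactness of $\tor^4$ ensures that $r_0$, $R_0$ and $\varepsilon_c$ can be chosen independently of $x$.
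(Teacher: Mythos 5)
Your argument captures exactly the geometric picture the paper invokes: every curve in $\mathcal{D}_x$ passes through $x$, their tangents at $x$ sweep an angular interval of width at least $\alpha$ (uniform by compactness), so the union Hausdorff-approximates a sector whose inscribed disc has uniform radius. The paper leaves the ``hence a ball fits inside'' step to Figure~1, while you make it precise by comparing the sweep map $G$ to the model $G_0(q,u)=x+u\hat T(q)$ and running a Brouwer-degree/homotopy argument; that is a genuine gain in rigor (and the oscillation of $\hat T(q)$ is indeed harmless, since the arc stays in a proper sub-arc of the circle of radius $r_0$, so the wiggly arc is homotopic rel endpoints to the straight one in the complement of $y_x$), but it is the same proof. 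One arithmetic slip at the end: with $\varepsilon_c=R_0/6$ one gets $R_0/2-\varepsilon_c=2\varepsilon_c$, not $3\varepsilon_c$, so the final inequality is not strict; taking $\varepsilon_c<R_0/6$, e.g. $\varepsilon_c=R_0/8$, repairs it.
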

\begin{proof}

Fix $x\in \tor^2$, using the fact that the map $H^u_{q_u,q}$ changes continuously in the $C^1$-topology with $q$, we can fix $r_0>0$ small enough such that for each $q\in W^u_R(q_u,g_1)$,  the set $H^u_{q,q_u}(W^{ws}_{r_0}(H^u_{q_u,q}(x)))$ is a curve of length greater than $Cr_0$, for some constant $C>0$, and it is tangent to a small cone containing $DH^u_{q,q_u}(H^u_{q_u,q}(x)) E^{ws}_q(H^u_{q_u,q}(x))$.  Moreover, the map $W^u_R(q_u,g_1) \ni q \mapsto H^u_{q,q_u}(W^{ws}_{r_0}(H^u_{q_u,q}(x)))$ is continuous.  Since $\alpha(x,q_x) = \alpha$, we have that the Hausdorff distance between $W^{ws}_{r_0}(x)$ and $ H^u_{q_x,q_u}(W^{ws}_{r_0}(H^u_{q_u,q_x}(x)))$ is uniformly bounded from below by a constant depending on $\alpha$.  Hence,  there exists $\varepsilon_c>0$ such that $\mathcal{D}_x$ contains a ball of radius $\varepsilon_c$ centered at some point $y_x\in \mathcal{D}_x$ (see Figure \ref{figure1}).  Furthermore, up to replacing $\varepsilon_c$ by $\varepsilon_c/4$, we can assume that the boundary of $B(y_x, \varepsilon_c)$ is  at least $2\varepsilon_c$ distant from  the boundary of $\mathcal{D}_x$.  Moreover,  by compactness, we can choose $\varepsilon_c$ to be uniform independent of $x$.  \qedhere

\end{proof}

\begin{figure}[h]
\centering
\includegraphics[width= 0.8\textwidth]{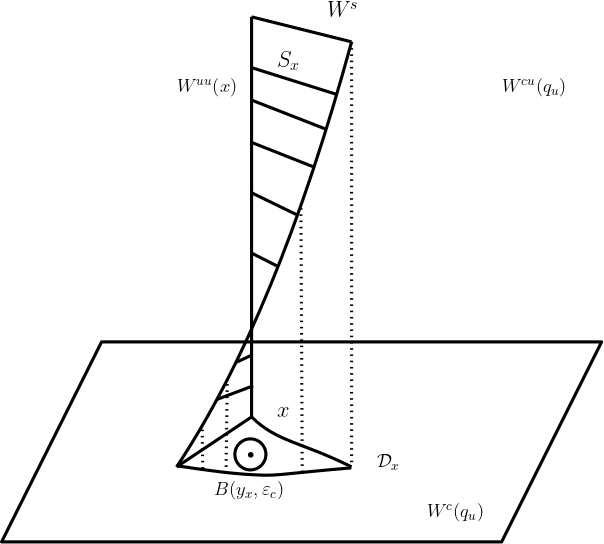}
\caption{Configuration of Lemma \ref{lem.uniforminterior}}
\label{figure1}
\end{figure}

For each $x\in \tor^2$, define the map $\mathcal{R}_x: W^u_R(q_u,g_1) \times \tor^2 \times (-\pi/2 , \pi/2) \to \tor^2$ as follows. For each $q \in W^u_R(q_u,g_1)$ and $\theta \in (-\pi/2, \pi/2)$, the map $\mathcal{R}_x(q,.,\theta)$ coincides with the rotation of angle $\theta$ centered at the point $H^u_{q_u,q}(x)$ inside the ball $B(H^u_{q_u,q}(x),2r_0) \subset \tor^2$, and it coincides with the identity outside $B(H^u_{q_u,q}(x),4r_0)$.  The map $\mathcal{R}_x$ can be obtained as the flow generated by a vector field that is tangent to the center manifolds (the vertical tori). Moreover, since the strong unstable manifold and the center manifolds are $C^1$, we can take $\mathcal{R}_x$ to be $C^1$.

Define $S_x:= \{ W^{ws}_{r_0}(H^u_{q_u,q}(x)): q_u\leq q \leq q_x\}$ and observe that $S_x$ is a topological surface.   We can assume that $\alpha>0$ is small enough such that for any $q\in [q_u,q_x]$ and $y\in H^u_{q_u,q}(\mathcal{D}_x)$, there exists a unique $\theta(q,y) \in (-\pi/2, \pi/2)$ such that $\mathcal{R}_x(q, y, \theta(q,y)) \in S_x$. Observe that if $\theta(q,y)=0$ this implies that $(q,y)\in S_x$.

 For any $\varepsilon \in (0, \varepsilon_c)$, consider the set
\[
\mathcal{B}^u(y_x, \varepsilon):= \displaystyle \bigcup_{y\in B(y_x,\varepsilon)} W^{uu}_{[q_u,q_x]} (y,g), 
\]
where $W^{uu}_{[q_u,q_x]}(y,g)$ is the segment of strong unstable manifold that projects into the piece of unstable manifold of $W^u(q_u,g_1)$ containing $q_u$ and $q_x$ as the boundary points.  Define the sets $D^-_x(\varepsilon):= B(y_x,\varepsilon)$ and $D^+_x:= H^u_{q_u, q_x}(B(y_x,\varepsilon))$. 

\begin{lemma}
\label{lem.tvi}
Let $\gamma:[0,1] \to \mathcal{B}^u(y_x, \varepsilon_c)$ be any continuous curve such that $\gamma(0) \in D_x^-$ and $\gamma(1) \in D^+_x$. Then there exists $t_0\in [0,1]$ such that $\gamma(t_0) \in S_x$. 
\end{lemma}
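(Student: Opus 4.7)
The plan is to reduce Lemma~\ref{lem.tvi} to an application of the intermediate value theorem after trivializing the tube by holonomy. The map $\Phi:B(y_x,\varepsilon_c)\times[p_u,q_x]\to\mathcal{B}^u(y_x,\varepsilon_c)$ defined by $\Phi(y,q):=H^u_{p_u,q}(y)$ is a homeomorphism that identifies $D^-$ with the bottom face $B(y_x,\varepsilon_c)\times\{p_u\}$, $D^+$ with the top face $B(y_x,\varepsilon_c)\times\{q_x\}$, and $S_x\cap\mathcal{B}^u(y_x,\varepsilon_c)$ with $\{(y,q):y\in C_q\}$, where $C_q:=H^u_{q,p_u}(W^{ws}_{r_0}(H^u_{p_u,q}(x)))$ is the curve in $\mathcal{D}_x$ corresponding to $q$. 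Writing $\tilde\gamma:=\Phi^{-1}\circ\gamma=(\tilde\gamma_y,\tilde\gamma_q)$, we have $\tilde\gamma_q(0)=p_u$, $\tilde\gamma_q(1)=q_x$ and $\tilde\gamma_y(t)\in B(y_x,\varepsilon_c)\subset\mathcal{D}_x$ for every $t\in[0,1]$.

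The continuous signed rotation angle $\theta(q,y)$ introduced just before the lemma vanishes exactly on $\Phi^{-1}(S_x)$. Setting $F(t):=\theta(\tilde\gamma_q(t),\tilde\gamma_y(t))$ gives a continuous function $[0,1]\to(-\pi/2,\pi/2)$ with $F(t)=0$ iff $\gamma(t)\in S_x$. It therefore suffices to show that $F(0)$ and $F(1)$ have opposite signs, whence the intermediate value theorem yields $t_0\in(0,1)$ with $\gamma(t_0)\in S_x$.

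To force the sign change I would first fix a continuous orientation of $E^{ws}$ along the relevant pieces of center leaves $W^c(q)$, $q\in[p_u,q_x]$, and choose conventions so that the tangent of $C_{q_x}$ at $x$ makes signed angle $+\alpha$ with $E^{ws}(x)$ in $W^c(p_u)$ (reversing orientations if needed). The hypothesis that $B(y_x,\varepsilon_c)\subset\mathcal{D}_x$ lies at distance at least $2\varepsilon_c$ from $\partial\mathcal{D}_x$—which contains $C_{p_u}=W^{ws}_{r_0}(x)$ and $C_{q_x}$—forces every $y\in B(y_x,\varepsilon_c)$ to sit strictly on the positive side of $C_{p_u}$ in $W^c(p_u)$, giving $\theta(p_u,y)<0$. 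Transporting by the orientation-preserving $C^1$-holonomy $H^u_{p_u,q_x}$ and using that $DH^u_{p_u,q_x}(x)E^{ws}(x)$ makes signed angle $+\alpha$ with $E^{ws}(H^u_{p_u,q_x}(x))$ at $H^u_{p_u,q_x}(x)$, the image $H^u_{p_u,q_x}(y)$ lands on the negative side of $W^{ws}_{r_0}(H^u_{p_u,q_x}(x))$ in $W^c(q_x)$, so $\theta(q_x,y)>0$, as required.

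The main obstacle is making this sign analysis rigorous; the subtle point is that one must rule out the possibility that $\mathcal{D}_x$ is a two-sided fan, i.e.\ that the signed angle $q\mapsto\tilde\alpha(x,q)$ straddles $0$ on $(p_u,q_x]$, in which case $B(y_x,\varepsilon_c)$ could cross $W^{ws}_{r_0}(x)$ and the above sign computation would fail for some $y\in B(y_x,\varepsilon_c)$. To fix this, one uses the freedom in Lemma~\ref{lem.uniforminterior} and restricts to the maximal sub-interval $[q',q_x]\subset[p_u,q_x]$ on which $\tilde\alpha(x,\cdot)$ has constant sign (nonempty because $\tilde\alpha(x,q_x)=\pm\alpha\neq 0$ and $\tilde\alpha$ is continuous), repeats the argument of Lemma~\ref{lem.uniforminterior} on this one-sided sub-fan to place $y_x$ there, and shrinks $\varepsilon_c$ so that $B(y_x,\varepsilon_c)$ stays in it. Then the sign analysis of the previous paragraph applies verbatim, orientation-preservation of $H^u$ is automatic by continuity in $q$ starting at the identity, and IVT concludes the proof.
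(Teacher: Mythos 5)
Your proof is essentially the same as the paper's: both apply the intermediate value theorem to the continuous angle function $\theta$ along $\gamma$, deducing a sign change from the fact that $B(y_x,\varepsilon_c)$ is at distance at least $2\varepsilon_c$ from $\partial\mathcal{D}_x$. The paper is terser—it simply asserts ``without loss of generality'' that $\theta(p_u,\cdot)\geq 0$ and $\theta(q_x,\cdot)\leq 0$ on $\mathcal{D}_x$, then extracts a uniform $\rho>0$ with $\theta(p_u,y)>\rho$ and $\theta(q_x,z)<-\rho$ on the ball and its holonomy image, and concludes by IVT—whereas you spell out the one-sidedness of the fan and flag that the WLOG is not entirely automatic if the signed angle $\tilde\alpha(x,\cdot)$ straddles $0$, which is a fair observation the paper glosses over.
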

\begin{proof}
Without loss of generality, we may suppose that for any point $p\in \mathcal{D}_x$ we have that $\theta(q_u,p) \geq 0$ and $\theta(q_x, H^u_{q_u,q_x}(p)) \leq 0$. 

 Since the distance between the boundary of $B(y_x,\varepsilon_c)$ and the boundary of $\mathcal{D}_x$ is greater than $2\varepsilon_c$, there exists a constant $\rho>0$ such that for any point $y\in B(y_x, \varepsilon_c)$ and $z\in H^u_{q_u,q_x}(B(y_x,\varepsilon_c))$ we have  $\theta(q_u, y) > \rho$ and $\theta(q_x, z) < -\rho$.  This constant can be taken independently of $x$. 
 
Observe that $\gamma(t)\in W^u_R(q_u,g_1)\times H^u_{q_u,q(t)}(\mathcal{D}_x)$, where $q(t)$ is the projection of $\gamma(t)$ into the base torus, then we can consider the function $\theta(t) := \theta(\gamma(t))$. Since $\gamma$ is continuous, and $\mathcal{R}_x$ is $C^1$, the function  $\theta(t)$ is a continuous in $t$. In particular, $\theta(0)>\rho$ and $\theta(1) <-\rho$. Hence, there exists $t_0$ such that $\theta(t_0) = 0$, which implies that $\gamma(t_0) \in S_x$. \qedhere

\end{proof}

\begin{figure}[h]
\centering
\includegraphics[width= 0.8\textwidth]{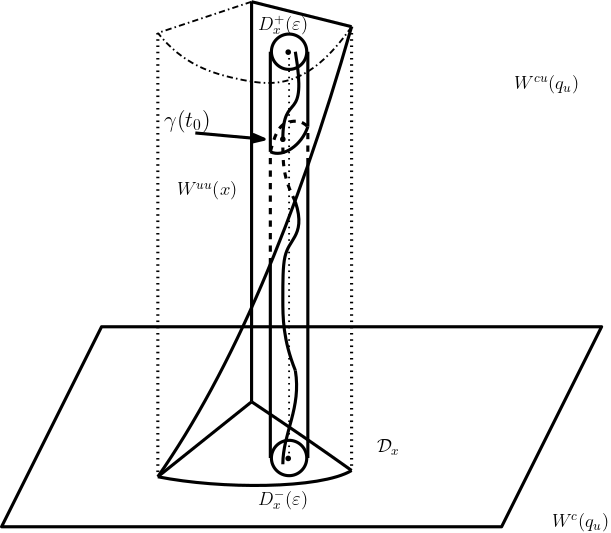}
\caption{Lemma \ref{lem.tvi}}
\label{figure2}
\end{figure}

For each $p\in \tor^2$, write $W^{cu}_R(p):= \bigcup_{x\in \tor^2} W^{uu}_{R}(x,g)$.  So far, we have been using the stable holonomies between two center manifolds. In this section, we will use the stable holonomy between two center unstable manifolds.  For  $p_s\in W^s_{loc}(q_u, g_1)$, let $H^s_{p_s, q_u}$ be the stable holonomy between $W^{cu}_R(p_s)$ and $W^{cu}_R(q_u)$. It is well known that $H^s_{p_s,q_u}$ is continuous, indeed, it is H\"older continuous. 

 For each $y\in \tor^2$, $\varepsilon>0$ and $\delta>0$, define the set 
 \[
 \mathcal{D}^s(y, \varepsilon, \delta):= \displaystyle \bigcup_{z\in B(y,\varepsilon)} W^{ss}_{\delta}(z,g).
 \]
Since the strong stable foliation is $C^1$ inside a center stable manifold, the set $\mathcal{D}^s(y,\varepsilon, \delta)$ is a $C^1$ submanifold. 

\begin{lemma}
\label{lem.stableintersection}
There exists $\delta_s>0$ such that for any $x\in \tor^2$ and any $z\in \mathcal{D}^s(y_x, \varepsilon_c/4, \delta_s)$, it holds that
\[
H^s_{z_s, q_u}(W^{uu}_{R}(z,g)) \cap S_x \neq \emptyset,
\]
where $z_s$ is the projection of $z$ into the base torus. 
\end{lemma}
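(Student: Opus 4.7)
The plan is to extract from $\tilde\gamma := H^s_{z_s,p_u}(W^{uu}_R(z,g))$ a strong unstable sub-arc to which Lemma~\ref{lem.tvi} applies directly. Given $z\in\mathcal D^s(y_x,\varepsilon_c/4,\delta_s)$, write $z\in W^{ss}_{\delta_s}(z')$ with $z'\in B(y_x,\varepsilon_c/4)\subset W^c(p_u)$. Because the rigid skew product structure forces $\pi_1$ to send strong stable leaves into $g_1$-stable leaves, for $\delta_s$ small we have $z_s\in W^s_{loc}(p_u,g_1)$, so $H^s_{z_s,p_u}$ is defined; and since $z'\in W^{ss}(z)\cap W^{cu}_R(p_u)$ lies near $z$, local uniqueness of stable holonomy gives $H^s_{z_s,p_u}(z)=z'$. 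As stable holonomy between center-unstable plaques preserves the strong unstable foliation, $\tilde\gamma$ is a strong unstable arc inside $W^{cu}_R(p_u)$ passing through $z'$.

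The next step is to show that $\tilde\gamma$ contains the arc $\gamma:=W^{uu}_{[p_u,q_x]}(z',g)$. By continuity of $p_s\mapsto H^s_{p_s,p_u}$ in the $C^0$-topology on center-unstable plaques, as $\delta_s\to 0$ the projection $\pi_1(\tilde\gamma)\subset W^u(p_u,g_1)$ Hausdorff-converges to the $R$-neighborhood of $p_u$ in $W^u(p_u,g_1)$. Since $d^u(p_u,q_x)\le R$ by construction of $R$ and the choice of $q_x\in W^u_R(p_u,g_1)$, for $\delta_s$ small enough the projection $\pi_1(\tilde\gamma)$ contains the arc $[p_u,q_x]$; the unique strong unstable lift through $z'$ with this projection is precisely $\gamma$, hence $\gamma\subset\tilde\gamma$.

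Finally, I apply Lemma~\ref{lem.tvi} with $\varepsilon=\varepsilon_c/4$. The curve $\gamma$ satisfies $\gamma\subset\mathcal B^u(y_x,\varepsilon_c)$ (because $z'\in B(y_x,\varepsilon_c/4)\subset B(y_x,\varepsilon_c)$), $\gamma(0)=z'\in D_x^-(\varepsilon_c/4)$, and $\gamma(1)=H^u_{p_u,q_x}(z')\in D_x^+(\varepsilon_c/4)$. Lemma~\ref{lem.tvi} then produces a point of $\gamma\cap S_x\subset\tilde\gamma\cap S_x$, which is exactly the intersection we need.

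The main obstacle is ensuring that a single $\delta_s$ works for \emph{all} $x\in\tor^2$. This reduces to uniform control, in $x$, on the modulus of continuity of $H^s_{\cdot,p_u}$ (at points varying over a compact neighborhood of $p_u$ in $W^s_{loc}(p_u,g_1)$) and on the lengths of the relevant strong unstable arcs. Such uniformity follows from the compactness of $\tor^2$ together with the uniform constants $\varepsilon_c$ and $\alpha$ established in Lemma~\ref{lem.uniforminterior} and the definition of $R$, all of which are chosen independently of $x$.
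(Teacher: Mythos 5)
The key step in your argument is the assertion that ``stable holonomy between center-unstable plaques preserves the strong unstable foliation,'' from which you deduce that $\tilde\gamma=H^s_{z_s,p_u}(W^{uu}_R(z,g))$ is itself a strong unstable arc containing $\gamma=W^{uu}_{[p_u,q_x]}(z',g)$. This assertion is false in general. The holonomy $H^s_{z_s,p_u}$ between the center-unstable plaques $W^{cu}_R(z_s)$ and $W^{cu}_R(p_u)$ is the strong stable holonomy (along $\mathcal{F}^{ss}$), and the property you need is exactly joint integrability of $E^{ss}$ and $E^{uu}$: for it to hold one would need that sliding along $\mathcal{F}^{ss}$ takes $\mathcal{F}^{uu}$-leaves to $\mathcal{F}^{uu}$-leaves. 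That is precisely the degenerate situation the paper is working to exclude (it appears as alternative (3) in Theorem~\ref{thm.urigidityanosov}, the case of $su$-tori, and is ruled out on $\mathcal{U}$ by accessibility). So your containment $\gamma\subset\tilde\gamma$ has no justification, and the final step ``$\gamma\cap S_x\subset\tilde\gamma\cap S_x$'' collapses.

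The paper's proof bypasses this entirely: it never claims $\tilde\gamma$ is a strong unstable arc. It only uses that, as $\delta_s\to 0$, the curve $W^{uu}_R(z,g)$ is uniformly $C^1$-close to $W^{uu}_R(y_x,g)$ and the holonomy $H^s_{z_s,p_u}$ is uniformly $C^0$-close to the identity, so $\tilde\gamma$ is a \emph{continuous} curve $C^0$-close to $W^{uu}_R(y_x,g)$. One then selects a subcurve of $\tilde\gamma$ lying in $\mathcal{B}^u(y_x,\varepsilon_c)$ and joining $D^-_x(\varepsilon_c)$ to $D^+_x(\varepsilon_c)$, and applies Lemma~\ref{lem.tvi}, which was deliberately stated for arbitrary continuous curves rather than strong unstable arcs. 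This is the ingredient your proof is missing: you must apply the intermediate-value argument (Lemma~\ref{lem.tvi}) directly to the holonomy image $\tilde\gamma$, not to a strong unstable leaf that you cannot show sits inside $\tilde\gamma$. Your remaining observations (that $z_s\in W^s_{loc}(p_u,g_1)$, that $H^s_{z_s,p_u}(z)=z'$, and that the uniformity in $x$ follows from compactness and the uniform choice of $\varepsilon_c$, $R$) are all correct and consistent with the paper.
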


\begin{proof}
Observe that if $z$ is close to $y_x$ then $W^{uu}_{R}(z,g)$ is uniformly $C^1$-close to $W^{uu}_{R}(y_x,g)$. Moreover, the map $H^s_{z_s,q_u}$ is uniformly $C^0$-close to the identity.  Hence, if $\delta_s>0$ is sufficiently small, then $H^s_{z_s, q_u}(W^{uu}_{R}(z,g))$ is a continuous curve  $C^0$-close to $W^{uu}_{R}(y_x,g)$. In particular, we can find a continuous curve $\gamma \subset H^s_{z_s, q_u}(W^{uu}_{R}(z,g))$ which is contained in $\mathcal{B}^u(y_x,\varepsilon_c)$ and intersects $D^-_x(\varepsilon_c)$ and $D^+_x(\varepsilon_c)$. The proof follows from  Lemma \ref{lem.tvi}. \qedhere
 
\end{proof}

\begin{proof}[Proof of Theorem B]

A standard application of Zorn's lemma implies that we can always find a compact set $L \subset \tor^4$ which is $u$-minimal, that is, $L = \overline{W^{uu}(p,g)}$ for every $p\in L$.  Let us fix a $u$-minimal set $L$. We will split the proof into two cases.

\subsubsection*{Case 1:} $L$ is periodic. 

In this case, there exists $k\in \mathbb{N}$ such that $g^k(L) = L$.  Suppose that $k$ is the smallest number with this property.  The set $\mathcal{L} = L \cup \cdots \cup g^{k-1}(L)$ is a compact, $u$-saturated and $g$ invariant set. Hence, there exists a $u$-Gibbs measure $\mu$ supported on $\mathcal{L}$. By Theorem A,  and since the support of the SRB measure of $g$ is $\tor^4$, we conclude that $\mathcal{L} = \tor^4$.  

If $k=0$ then $L = \tor^4$ and we are done. Otherwise, we obtain that $\tor^4$ is decomposed into a finite union of compact subsets. Hence,  there exists $i<k-1$ such that $L \cap g^i(L) \neq \emptyset$. The set $g^i(L)$ is also $u$-minimal and two $u$-minimal sets are either the same  or disjoint.  Therefore, $g^i(L) = L$ but this contradicts our choice of $k$. This finishes the proof of the periodic case. 

\subsubsection*{Case 2:} $L$ is aperiodic. 

In this case the sequence of sets $\left(g^n(L)\right)_{n\in \mathbb{Z}}$ is pairwise disjoint. 

\begin{claim}
\label{claim.claim1}
There exist a number $k\in \mathbb{N}$ and a sequence $(n_j)_{j\in \mathbb{N}}$ such that $n_j \to +\infty$, as $j\to +\infty$, and $W^s_{loc}(g^{-n_j}(L),g) \cap g^{-n_j + k}(L) \neq \emptyset$. 
\end{claim}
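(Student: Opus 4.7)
The plan is to combine compactness of the space of compact subsets of $\tor^4$ in the Hausdorff topology, Birkhoff recurrence for the induced $g$-action on this space, and the local product structure of the Anosov diffeomorphism $g$ together with the $\mathcal{F}^{uu}$-saturation of $L$.

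Writing $L_n := g^{-n}(L)$, the sequence $(L_n)_{n\geq 0}$ lives in the compact metric space $\mathcal{K}(\tor^4)$ of compact subsets of $\tor^4$ equipped with the Hausdorff distance. By compactness, I extract a subsequence so that $L_{n_j}\to L_\infty$ in Hausdorff distance for some compact subset $L_\infty\subset \tor^4$. Since each $L_n$ is $\mathcal{F}^{uu}$-saturated (as $L$ is, and $\mathcal{F}^{uu}$ is $g$-invariant), and $\mathcal{F}^{uu}$-saturation is preserved under Hausdorff limits, the set $L_\infty$ is also $\mathcal{F}^{uu}$-saturated. Next, I apply Birkhoff recurrence to the continuous action $K\mapsto g(K)$ on $\mathcal{K}(\tor^4)$: the point $L_\infty$ is Hausdorff-recurrent in its forward $g$-orbit closure, so for every prescribed $\delta>0$ there exists $k\geq 1$ with $d_H(g^k(L_\infty),L_\infty)<\delta$. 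Fix such a $k$. Then for $j$ large, $L_{n_j}$ is Hausdorff-close to $L_\infty$ and $L_{n_j-k}=g^k(L_{n_j})$ is Hausdorff-close to $g^k(L_\infty)$; hence $L_{n_j}$ and $L_{n_j-k}$ are Hausdorff-close to each other within $\delta$ plus a term going to $0$ with $j$.

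It remains to upgrade this Hausdorff closeness to the required $W^s_{loc}$-intersection. Taking the scale $\delta$ smaller than the uniform size of the local product of $g$, I pick $p_j\in L_{n_j}$ and $q_j\in L_{n_j-k}$ with $d(p_j,q_j)<\delta$. By the Anosov local product, $q_j$ lies on the local stable leaf of some point $z_j\in W^u_{loc}(p_j)$, i.e.\ $q_j\in W^s_{loc}(z_j)$. To conclude that $q_j\in W^s_{loc}(L_{n_j})$ I need $z_j\in L_{n_j}$. Here I use the $\mathcal{F}^{uu}$-saturation of $L_{n_j}$ to absorb the strong-unstable component of $z_j$ relative to $p_j$, and the fact that $\pi_1(L_{n_j})=\tor^2$ (a consequence of the rigid skew product structure together with minimality of the $g_1$-unstable foliation on $\tor^2$) to adjust the remaining weak-unstable component by sliding $p_j$ within $L_{n_j}$.

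The main obstacle lies in this final conversion step. The foliation $\mathcal{F}^{uu}$ is one-dimensional inside the two-dimensional Anosov unstable bundle, so $\mathcal{F}^{uu}$-saturation alone does not match the weak-unstable component of the local-product decomposition of $q_j$ relative to $p_j$; controlling this weak-unstable coordinate cleanly requires combining the $C^1$-regularity of $W^{uu}$ on center manifolds (coming from the center-bunching hypothesis) with the base-projection property $\pi_1(L_n)=\tor^2$, rather than relying on Hausdorff approximation alone. This is the technically delicate part of the argument.
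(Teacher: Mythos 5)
Your outline is incomplete in exactly the place you flag, and the gap is not a technicality that can be patched by being careful with local product structure: it is the mathematical heart of the claim. A $u$-minimal set $L$ is $\mathcal F^{uu}$-saturated but not $\mathcal F^{u}$-saturated, so when you write $q_j\in W^s_{loc}(z_j)$ with $z_j\in W^u_{loc}(p_j)$, the point $z_j$ differs from $p_j$ by both a $uu$- and a $wu$-coordinate, and $\mathcal F^{uu}$-saturation only absorbs the first. Projecting onto the base ($\pi_1(L_n)=\tor^2$) does not let you ``slide $p_j$ within $L_{n_j}$'' to fix the $wu$-coordinate: inside a single center fiber $W^c(p_u)$, the trace $L_{n_j}\cap W^c(p_u)$ is a priori a thin (e.g.\ Cantor-like) set, and nothing in the rigid skew product structure forces it to cover a curve of $wu$-directions. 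If $E^{ss}\oplus E^{uu}$ were jointly integrable, Hausdorff closeness of $L_{n_j}$ and $L_{n_j-k}$ would be completely compatible with the two sets living on disjoint $su$-leaves, with no $W^s_{loc}$-intersection at all. Any proof must therefore use the non-joint-integrability of $E^{ss}$ and $E^{uu}$, and your proposal never does.

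That is what the paper's Lemmas \ref{lem.uniforminterior}, \ref{lem.tvi} and \ref{lem.stableintersection} are for. Proposition \ref{prop.mainprop} produces (via the open-ness of the transversality obtained in Section \ref{section.transversality}) a uniform angle $\alpha>0$ between $E^{ws}$ and its image under strong unstable holonomy. This lets one build, on each center fiber over $W^c(p_u)$, a surface $S_x$ foliated by weak-stable arcs of points of $W^{uu}(x,g)\subset g^{-n}(L)$, whose shadow $\mathcal D_x$ in $W^c(p_u)$ has a uniform-size interior ball (Lemma \ref{lem.uniforminterior}). Any strong unstable curve starting in the stable-thickened disc $\mathcal D^s(y_x,\varepsilon_c/4,\delta_s)$, pushed by the stable holonomy, must cross $S_x$ by a degree/intermediate-value argument (Lemmas \ref{lem.tvi} and \ref{lem.stableintersection}); that crossing is precisely the required $W^s_{loc}$-intersection. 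Your Hausdorff-recurrence step can also be dispensed with: the paper instead invokes Lemma \ref{lem.density} (uniform $\varepsilon$-density of $\bigcup_{j=0}^{m}g^j(W^{uu}_1(p,g))$, itself a consequence of Theorem A's uniqueness of the $u$-Gibbs measure) to force $g^{-n+k_n}(L)$ to hit the transversal $\mathcal D^s(y_n,\varepsilon_c/4,\delta_s)$ for some $k_n\in\{0,\dots,m-1\}$, and then uses the pigeonhole principle on the finite range $\{0,\dots,m-1\}$ to get a single $k$ working along a subsequence. This is both more direct and, crucially, produces a uniform bound on $k$, which a bare Hausdorff compactness argument does not give without an extra minimality/almost-periodicity step.

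In short: the recurrence half of your plan is replaceable and the stable-intersection half is genuinely missing, and what is missing is precisely the input from Proposition \ref{prop.mainprop} (transversality of $E^{ws}$ under unstable holonomy), without which the claim is false in general.
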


\begin{proof}
Let $\varepsilon_c$ and $\delta_s$  be as in Lemmas \ref{lem.uniforminterior} and \ref{lem.stableintersection}.  Since the unstable foliation of an Anosov diffeomorphism in $\tor^2$ is minimal, for every $n\in \mathbb{N}$ the projection of $g^{-n}(L)$ into the base torus is $\tor^2$. In particular, for every $n\in \mathbb{N}$ we can choose a point $x_n \in W^c(q_u) \cap g^{-n}(L)$.  Let $y_n\in W^c(q_u)$ be the point given by Lemma \ref{lem.uniforminterior} for the point $x_n$ and consider $\mathcal{D}^s(y_n, \varepsilon_c/4, \delta_s)$.  Observe that for every $y\in W^c(q_u)$, the set $\mathcal{D}^s(y, \varepsilon_c/4, \delta_s)$ is a $C^1$ submanifold of uniform size and uniformly transverse to the strong unstable foliation. 

Fix $\varepsilon>0$ small and by Lemma \ref{lem.density}, there exists $m= m(\varepsilon)$ such that for every $n\in \mathbb{N}$ the set
\[
\displaystyle \bigcup_{j=0}^{m-1} g^{-n + j}(L)
\]
is $\varepsilon$-dense.  Hence, there exists $k_n\in \{0, \cdots, m-1\}$ such that $g^{-n + k_n}(L) \cap \mathcal{D}^s(y_n, \varepsilon_c/4, \delta_s) \neq \emptyset$. By Lemma \ref{lem.stableintersection},  $g^{-n + k_n}(L) \cap W^s_{loc}(g^{-n}(L), g) \neq \emptyset$.
 
 By the pigeonhole principle, we can find $k\in \{0, \cdots, m-1\}$ and a sequence $n_j \to +\infty$ that verify the conclusion of the claim. \qedhere
\end{proof}

By Claim \ref{claim.claim1},   there are points $p_{n_j} \in g^{-n_j + k}(L)$  and $p_{n_j}' \in g^{-n_j}(L)$ such that $p_{n_j} \in W^s_{loc}(p_{n_j}', g)$.  Hence, $\lim_{j\to +\infty} d(g^{n_j}(p_{n_j}), g^{n_j}(p_{n_j}') = 0$. Therefore, $g^k(L) \cap L \neq \emptyset$. Since, $L$ is $u$-minimal, we conclude that $g^k(L) = L$ and $L$ is periodic, which is a contradiction. \qedhere

\end{proof}

%
%
%

\information


\begin{thebibliography}{15}

\bibitem[ALOS22]{alos}
S. Alvarez, M. Leguil, B. Santiago, and D. Obata.
\newblock Rigidity of $u$-Gibbs measures near conservative Anosov diffeomorphisms on $\mathbb{T}^3$.
\newblock {\em Preprint on 	arXiv:2208.00126}, 2022. 

\bibitem[BQ11]{benoistquint}
Y. Benoist, and J.F.  Quint.
\newblock Mesures stationnaires et ferm\'es invariants des espaces homog\`enes.
\newblock \emph{Ann. of Math.}, 174:1111--1162, 2011.

\bibitem[BDV05]{bdv}
C. Bonatti, L. Diaz, and M. Viana.
\newblock Dynamics beyond uniform hyperbolicity.
\newblock {\em Springer-Verlang, Berlin}, Volume 102, 2005.


\bibitem[Bow75]{bowenbook}
R. Bowen.
\newblock Equilibrium states and the ergodic theory of {A}nosov diffeomorphisms.
\newblock {\em Lecture Notes in Mathematics, Springer-Verlag, Berlin-New York}, Volume 470, 1975.

\bibitem[Br22]{brown}
A. Brown.
\newblock Smoothness of stable holonomies inside center-stable manifolds.
\newblock{\em Ergodic Theory and Dynamical Systems}, 42:3593--3618, 2022.

\bibitem[BRH17]{brownhertz}
A. Brown, and F. Rodriguez Hertz.
 \newblock Measure rigidity for random dynamics on surfaces and related skew products.
 \newblock{\em J. Amer. Math. Soc.}, 30(4):1055--1132, 2017.
 
 
\bibitem[CD23]{cantatdujardin}
S. Cantat, and R. Dujardin.
\newblock Random dynamics on real and complex projective surfaces.
\newblock \emph{J. Reine Angew. Math.}, 802:1--76, 2023.

\bibitem[EL]{el}
A. Eskin,  and E. Lindenstrauss. 
\newblock Random walks on locally homogeneous spaces.
\newblock Preprint on A. Eskin's webpage. 


\bibitem[EM18]{eskinm}
A. Eskin,  and M. Mirzakhani.
\newblock Invariant and stationary measures for the $\mathrm{SL}(2,\mathbb{R})$ action on moduli space. 
\newblock \emph{Publ. Math. Inst. Hautes \'Etudes Sci. }, 127:5--324, 2018.

\bibitem[EPZ23]{EskinPotrieZha}
A. Eskin, R. Potrie, and Z. Zhang.
\newblock Geometric properties of partially hyperbolic measures and applications to measure rigidity.
\newblock \emph{Preprint on arXiv:2302.12981}, 2023. 

\bibitem[Hi76]{hirsch}
M. Hirsch.
\newblock Differential topology.
\newblock \emph{ Springer-Verlag, New York-Heidelberg}, Graduate Texts in Mathematics, No. 33, 1976.

\bibitem[HS17]{horitasambarino}
V. Horita, and M. Sambarino.
\newblock Stable ergodicity and accessibility for certain partially hyperbolic diffeomorphisms with bidimensional center leaves.
\newblock {\em Comment. Math. Helv.}, 92:467--512, 2017.



\bibitem[Ka23]{katz}
A. Katz.
\newblock Measure rigidity of Anosov flows via the factorization method.
\newblock \emph{Geom. Funct. Anal.}, 33:468--540, 2023.

\bibitem[Ob23]{obata}
D. Obata.
\newblock Open sets of partially hyperbolic skew products having a unique SRB measure.
\newblock \emph{Adv.  Math.},  427:109136, 91 pp, 2023. 

\bibitem[Pa00]{Pa00}
J.~Palis.
\newblock A global view of {D}ynamics and a conjecture on the denseness of
  finitude of attractors.
\newblock {\em Ast{\'e}risque} 261:335--347.
\newblock \emph{G\'eom\'etrie complexe et syst\`emes dynamiques}, 2000.

\bibitem[PSW97]{psw1}
C. Pugh, M. Shub, and A. Wilkinson.
\newblock H\"older foliations.
\newblock \emph{Duke Math. J.}, 86:517--546, 1997.

\bibitem[PSW00]{psw2}
C. Pugh, M. Shub, and A. Wilkinson.
\newblock Correction to: ``{H}\"older foliations''.
\newblock \emph{Duke Math. J.}, 105:105--106, 2000.

\bibitem[Ro52]{rokhlin}
V. Rohlin.
\newblock On the fundamental ideas of measure theory.
\newblock {\em Amer. Math. Soc. Translation}, 1952.

\bibitem[Ru76]{ruelle}
D. Ruelle.
\newblock A measure associated with Axiom A attractors.
\newblock {\em Amer. J. Math.}, 98:619--654, 1976.


\bibitem[Si72]{sinai}
J. Sina\u{\i}.
\newblock Gibbs measures in ergodic theory.
\newblock {\em Uspehi Mat. Nauk}, 27:21--64, 1972.


\bibitem[You02]{young}
L-S. Young.
\newblock What are SRB measures, and which dynamical systems have them?
\newblock {\em J. Statist. Phys.}, 108:733--754, 2002.
\end{thebibliography}
\end{document}